\newtheorem{theorem}{Theorem}[section]
\newtheorem{lemma}[theorem]{Lemma}
\theoremstyle{definition}
\newtheorem{definition}[theorem]{Definition}
\newtheorem{proposition}[theorem]{Proposition}
\newtheorem{remark}[theorem]{Remark}
\numberwithin{equation}{section}
\def\w{\omega}
\def\v {\varepsilon}
\def\a {\alpha}
\def\d {\delta}
\def\g {\gamma}
\def\m {\mu}
\def\r+ {\mathbb{R}^+}
\def\r {\mathbb{R}}
\begin{document}

\title[A note on Nakai's generalized Morrey spaces and applications]
{Morrey meets Muckenhoupt: A note on Nakai's generalized Morrey spaces and applications}

  %  Current address
 \author{Xian Ming Hou}
\address{Xianming Hou\\School of Mathematical Sciences\\Xiamen
University \\
Xiamen 361005\\
 P. R. China}
\email{houxianming37@163.com}
%
%    Information for second author
\author{Qing Yan Wu}
\address{Qingyan Wu\\Department of Mathematics\\Linyi
University\\ Linyi 276005, P.R. China and Department of Mathematics, Macquarie University, NSW, 2109, Australia}
\email{wuqingyan@lyu.edu.cn}

   % Information for Third author
  \author{Zun Wei Fu}
\address{Zunwei Fu\\Department of Mathematics\\Linyi
University \\
Linyi 276005\\
P. R. China and Department of Information and Telecommunications Engineering, The University of Suwon, Wau-ri, Bongdam-eup, Hwaseong-si,
Gyeonggi-do, 445-743, Korea} \email{zwfu@mail.bnu.edu.cn or zwfu@suwon.ac.kr}

% Information for Fourth author
\author{Shan Zhen Lu}
\address{Shanzhen Lu\\School of Mathematical Sciences, Beijing Normal University, Beijing,
100875, P. R. China}
\email{lusz@bnu.edu.cn}
\thanks{}
    %General info
\subjclass[2010]{42B35, 42B20, 42B99}

%\date{May 1, 2015 and, in revised form, June 22, 2015.}

\keywords{Morrey space, one-sided weight, Riemann-Liouville fractional integral, fractional differential equation.}

\begin{abstract}
The goal of this paper is to extend Nakai's generalized Morrey spaces to a wider function class, the one-sided Muckenhoupt weighted case. Morrey matching Muckenhoupt enables us to study the weak and strong type boundedness of one-sided sublinear operators satisfying certain size conditions on the one-sided weighted Morrey spaces.
%We also establish one-sided Fefferman-Stein inequalities on one-sided weighted Morrey spaces in this paper.
Meanwhile, the boundedness and compactness of Riemann-Liouville integral operators on locally one-sided weighted Morrey space are considered. As applications, we establish the existence and uniqueness of solutions to a Cauchy type problem associated with fractional differential equations.
 %after giving a weighted Fr\'{e}chet-Kolmogorov theorem.
\end{abstract}
\maketitle

\section{Introduction}
It is well-known that Morrey \cite{MO} first introduced the classical Morrey spaces $L^{p,\lambda}(\Omega)$ to investigate the local behavior of solutions to second-order elliptic partial differential equations:
\begin{align*}
\|f\|_{L^{p,\lambda}(\Omega)}:= {\sup_{x\in \Omega, r>0}} \left(\frac{1}{|B(x, r)|^{\lambda}} \int_{B(x, r)\cap\Omega}|f(y)|^p dy \right)^{1/p},
\end{align*}
where $1\leq p<\infty$, $\lambda\geq 0$, $\Omega \in \mathbb{R}^{n}$ is an open set. It is obvious that $L^{p, 0}(\Omega)=L^p(\Omega)$ and $L^{p, 1}(\Omega)=L^\infty(\Omega)$.
Therefore, it is interesting to consider the case of $0<\lambda<1$.

As we know, Morrey space is nonseparable. Therefore, there are no approximation skills in this kind of spaces, which are quite different from the separable spaces (e.g. continuous or integrable functional spaces). In recent years, there is an explosion of interest in Morrey type spaces and their applications, see \cite{Rafeiro}, \cite{YSY} and references therein.
In \cite{Mizuhara}, Mizuhara studied a version of generalized Morrey spaces and got the boundedness of Calder\'{o}n-Zygmund operators on these spaces. Nakai \cite{Nakai} introduced the following generalized Morrey space $L^{p,\omega}(\mathbb{R}^n)$:

Let $1\leq p<\infty$ and denote $\mathcal{Q}=\mathcal{Q}(\mathbf{a},r)=\{x\in\mathbb{R}^n: |x_i-a_i|\leq r/2, i=1,2,\ldots,n\}$ for every $r>0$
$$L^{p,\omega}(\mathbb{R}^n):= \left\{f\in L_{\rm { loc}}^p(\mathbb{R}^n):\sup_{\mathcal{Q}}\frac{1}{\omega(\mathcal{Q})} \int_{\mathcal{Q}}|f(x)|^p dx <\infty \right\},$$
$$\|f\|_{p,\omega}:= \sup_{\mathcal{Q}}\left(\frac{1}{\omega(\mathcal{Q})} \int_{\mathcal{Q}}|f(x)|^p dx\right)^{1/p},$$
where $\omega: \mathbb{R}^n\times\mathbb{R}^+\rightarrow \mathbb{R}^+$ and for $\mathbf{a}\in \mathbb{R}^n$, $\omega$ satisfies
\begin{align}\label{1.1}
r\leq t\leq2r \Longrightarrow C^{-1}\leq\frac{\omega(\mathbf{a},t)}{\omega(\mathbf{a},r)}\leq C,
\end{align}
\begin{align}\label{2.1}
\int_r^\infty\frac{\omega(\mathbf{a},t)}{t^{n+1}}dt\leq C \frac{\omega(\mathbf{a},r)}{r^{n}}.
\end{align}
The boundedness of Hardy-Littlewood maximal operator, singular integral operators and the Riesz potentials on $L^{p,\omega}(\mathbb{R}^n)$ was established.
Then $L^{p,\omega}(\mathbb{R}^n)$ is a Banach space with norm $\|\cdot\|_{p,\omega}$. If $\omega(\mathbf{a},r)=1$, then $L^{p,\omega}= L^p$. If $\omega(\mathbf{a},r)=r^n$, one has $L^{p,\omega}=L^\infty$. And for $0<\lambda<1$, $\omega(\mathbf{a},r)=r^{n\lambda}$, $L^{p,\omega}$ is the classical Morrey space $L^{p,\lambda}$.
The article \cite{Nakai} of Nakai has been cited nearly three hundred times and attracts a great deal of interest.
Softova \cite{SOF} studied singular integrals and commutators in generalized Morrey spaces. Guliyev et al. \cite{Guliyev} considered the boundedness of the maximal, potential and singular operators in the generalized Morrey spaces. The action of the generalized fractional integral operators and the generalized fractional maximal operators was investigated in the framework of Morrey spaces in \cite{SAWST}. In \cite{Ne}, Nakai proved basic properties of Orlicz-Morrey spaces and gave a necessary and sufficient condition for boundedness of the Hardy-Littlewood maximal operator on Orlicz-Morrey space.
%Komori and Takahiro \cite{Komori} studied a factorization theorem for functions in $H^1(\mathbb{R}^n)$ via generalized Morrey spaces.
In \cite{LuYangZhou}, Lu et al. established the boundedness of rough operators and their
commutators with BMO functions in generalized Morrey spaces. Then, Komori and Shirai introduced the weighted Morrey space in \cite{KS}.
See \cite{BGG,
BGGM, CDW, GUL, FanLuYang, FanPanYang, Kurata1, Mizuhara1, Persson, Samko, SAW, SW} for more information on this topic.

On the other hand, both the generalization of the theory of two-sided operators and the requirements
of ergodic theory (see \cite{Ca}, \cite{JKGW}, \cite{MOD}) are motivations to study one-sided estimates for operators.
The well-known Riemann-Liouville fractional integral can be viewed as the one-sided
version of Riesz potential, see \cite{BW}.
The study of one-sided operators promote naturally the
development of one-sided spaces. See \cite{FLSP,FLSS,MR,RD,SA} for other works about one-sided operators and spaces. It is shown that many famous results in harmonic analysis still hold for a class of smaller operators (one-sided operators) and a class of wider weights (one-sided weights). It should be pointed out that, in one-sided cases, the classical reverse H\"{o}lder's inequality and doubling condition do not hold.
Thus, these new changes have caused some difficulties in our proofs.

Let $\omega$ be an $A_p$ weight function (see \cite{Muckenhoupt,Muckenhoupt1}). If $0<\alpha<1$, $1\leq p\leq 1/\alpha$, $\omega(I)=\left(\int_I \omega(x)dx\right)^\alpha$, then $\omega$ satisfies \eqref{1.1} and \eqref{2.1}.
As is well-known, the $A_p$ weight functions satisfy doubling conditions \eqref{1.1}.
However, some one-sided $A_p$ weights do not satisfy \eqref{1.1}. In fact, let $1\leq p<\infty$. One-sided $A_p$ weights $\omega$ satisfy a so-called one-sided doubling condition:
\begin{align}\label{11.1}
\int _{x_0-h}^{x_0+h}\omega \lesssim \int _{x_0}^{x_0+h}\omega
 \end{align}
for all $x_0\in \mathbb{R}$ and $h>0$. See its definition in Section 2. We should mention that the one-sided weight function $\omega(x)= e^x$ verifies \eqref{11.1}, but does not satisfy \eqref{1.1} and \eqref{2.1}. Our goal is to define one-sided weighted Morrey spaces and give some applications.

This paper will be organized as follows. In Section $2$, we establish the weak and strong type boundedness of one-sided sublinear operators under specific size conditions and some preliminaries that are essential to our proofs. Section $3$ is devoted to the boundedness of Riemann-Liouville integral operators on locally one-sided weighted Morrey space. The representation of functions by fractional integrals in locally one-sided weighted Morrey spaces will be given in Section $4$. In Section $5$, by Schauder's fixed-point theorem, we obtain the existence and uniqueness of solutions to a Cauchy type problem for fractional differential equations by a new weighted Fr\'{e}chet-Kolmogorov theorem.

Throughout this paper, for $x_0 \in  \mathbb{R}$ and $h,\;\lambda > 0$, we will always denote that $I = (x_0,x_0+h)$, $I^+ = (x_0+h,x_0+2h)$, $I^- = (x_0-h, x_0)$, $\lambda I = (x_0,x_0+\lambda h)$ and $(\lambda I)^-_{-}= (x_0-(\lambda+1)h,x_0-h)$. $C$ stands for a positive constant but it may change from line to line. We sometimes use the notation $a \lesssim b$ as an equivalent of $a\leqslant Cb.$
$\chi_{\Omega}$ represents the characteristic function of set $\Omega.$
%
%Throughout this paper, the letter $C$, sometimes with additional
%parameters, will stand for positive constants, not necessarily
%the same one at each occurrence, but independent of the essential
%variables.

\section{one-sided weighted Morrey spaces and related sublinear operators}

We will describe the process of defining of one-sided weighted Morrey space in brief.

Firstly, let $1<p<\infty$ and $0\leq\lambda<1$. We define
$$\|f\|_{\widetilde{L}^{p,\lambda}(\mathbb{R})}:=\sup_{x_0\in\mathbb{R},h>0}\left(\frac{1}{h^{\lambda}}
\int_{x_0}^{x_0+h}|f(y)|^pdy\right)^{1/p}<\infty.$$\\
Set $x_0=z_0-h/2$ and $h/2=h'$. By a simple estimate, we have $\widetilde{L}^{p,\lambda}(\mathbb{R})=L^{p,\lambda}(\mathbb{R}).$

Secondly, we equip the Lebesgue measure on interval $(x_0, x_0+h)$. Then $h^\lambda=|(x_0, x_0+h)|^\lambda.$  We deduce
$$\|f\|_{L^{p,\lambda}(\mathbb{R})}:=\sup_{x_0\in\mathbb{R},h>0}\left(\frac{1}{|(x_0, x_0+h)|^\lambda}\int_{x_0}^{x_0+h}|f(y)|^pdy\right)^{1/p}<\infty.$$\\

Finally, for $1\leq \theta<\infty$, we change the Lebesgue measure on interval $(x_0, x_0+h)$ into the weight function measure $\omega$ and let $\omega^\theta(x_0,x_0+h)=\int_{x_0}^{x_0+h}\omega^\theta$. Clearly, when $\omega \equiv 1$, $h^{\lambda-1}\omega^\theta(x_0, x_0+h)=h^\lambda$.

Let $1\leq p<\infty$, $0\leq\lambda< 1$, $1\leq \theta<\infty$. For any $x_0\in \r$ and $h>0$, set
\begin{align}\label{eq2.1}
\Phi_{\omega,\lambda,\theta}^{+}(x_0,h):={{h^{\lambda-1}}\omega^\theta(x_0-h,x_0)},\,
\Phi_{\omega,\lambda,\theta}^{-}(x_0,h):={{h^{\lambda-1}}\omega^\theta(x_0,x_0+h)}.
\end{align}
Now we give the definition of one-sided weighted Morrey space as follows:
$$\|f\|_{{L^{p,\lambda}_{+}(\r,\omega^\theta)}}:= \sup_{{x_0\in\r
, h>0}}\Big(\frac{1}{\Phi_{\omega,\lambda,\theta}^{+}(x_0,h)}\int_{x_0}^{x_0+h}
|f(y)|^pdy\Big)^{1/p}<\infty,$$
and $$\|f\|_{{L^{p,\lambda}_{-}(\r,\omega^\theta)}}:= \sup_{{x_0\in\r
, h>0}}\Big(\frac{1}{\Phi_{\omega,\lambda,\theta}^{-}(x_0,h)}\int_{x_0}^{x_0+h}
|f(y)|^pdy\Big)^{1/p}<\infty.$$
We denote $\|f\|_{{L^{p,\lambda}_{+}(\mathbb{R},\omega^\theta)}}=\|f\|_{{L^{p,\lambda}_{+}(\omega^\theta)}}$.

Let $1\leq p<\infty$, $0\leq\lambda< 1$, $1\leq \theta<\infty$. We say a function belongs to the one-sided weighted Morrey spaces of weak type if one of the following norms is finite:
$$\|f\|^{p}_{W{L^{p,\lambda}_+(\r,\omega^\theta)}}:= \sup_{{x_0\in\r,h>0}}\frac{1}{\Phi_{\omega,\lambda,\theta}^{+}(x_0,h)}\sup_{\gamma>0 }\gamma^p\big|\left\{x\in(x_0,x_0+h):|f(x)|>\gamma\right\}\big|,$$
$$\|f\|^{p}_{W{L^{p,\lambda}_-(\r,\omega^\theta)}}:= \sup_{{x_0\in\r,h>0
}}\frac{1}{\Phi_{\omega,\lambda,\theta}^{-}(x_0,h)}\sup_{\gamma>0 }\gamma^p\big|\left\{x\in(x_0-h,x_0):|f(x)|>\gamma\right\}\big|.$$

We turn to the one-sided weight functions now. In 1986, Sawyer \cite{SA} first introduced the one-sided Muckenhoupt weights $A_p^+$ and $A_p^-$ to treat the one-sided Hardy-Littlewood maximal operators
$$M^+f(x):= \sup\limits_{h>0}\frac{1}{h}\displaystyle\int_x^{ x+h}|f(y)| dy,\quad
M^-f(x):= \sup\limits_{h>0}\frac{1}{h}\displaystyle\int_{ x-h}^x|f(y)| dy.$$
A positive function $\omega$ is said to belong to $A_p^+$ or $A_p^-$ if it satisfies
$$A_p^+(\omega) :=\sup _{a<b<c}\frac{1}{(c-a)^p}\int_a^b\omega(x)dx\left(\int_b^c\omega(x)^{ 1-p'}dx\right)^{p-1} < \infty$$
or
$$A_p^-(\omega):=\sup _{a<b<c}\frac{1}{(c-a)^p}\int_b^c\omega(x)dx\left(\int_a^b\omega(x)^{ 1-p'}dx\right)^{p-1} < \infty,$$\\
when $1< p < \infty$; also, for $p = 1$,
\begin{center}
$A_1^+:M^-\omega \leq C\omega,$ \quad$A_1^-:M^+\omega \leq C\omega,$
\end{center}
for some constant $C$. If $1\leq p<\infty$, then $A_p\subsetneqq A_p^+$ and $A_p\subsetneqq A_p^-$. Notice that the function $\omega(x)=e^x$ mentioned above is in $A_p^+$ but not in $A_p$.

Similarly, the double weight classes  $A^+_{(p,q)}$ and $A^-_{(p,q)}$ are denoted by
$$A^+_{(p,q)}:  \frac{1}{(c-a)^{1-\alpha}}\left(\int_a^b\omega^q\right)^{1/q}\left(\int_b^c\omega^{ -p'}\right)^{1/p'} < C,$$
$$A^-_{(p,q)}:  \frac{1}{(c-a)^{1-\alpha}}\left(\int_b^c\omega^q\right)^{1/q}\left(\int_a^b\omega^{ -p'}\right)^{1/p'} < C$$
for all $a<b<c \in \mathbb{R}$, $0<\alpha<1$, $1<p<q$ and $1/p-1/q=\alpha$; also for $p=1$, $1-1/q=\alpha$,
\begin{center}
$A_{(1,q)}^+:M^-{\omega}^q \lesssim {\omega}^q ,$\quad
$A_{(1,q)}^-:M^+{\omega}^q \lesssim {\omega}^q .$
\end{center}

In \cite{AFM}, Aimar, Forzani and Mart\'{i}n-Reyes introduced the one-sided Calder\'{o}n-Zygmund singular integrals defined by:
$$T^+f(x) = \lim_{\varepsilon \rightarrow 0^+} \int_{x+\varepsilon} ^\infty K(x-y)f(y)dy$$and
$$T^-f(x) = \lim_{\varepsilon \rightarrow 0^+} \int_{-\infty} ^{x-\varepsilon} K(x-y)f(y)dy,$$
where the kernel $K$ is called the one-sided Calder\'{o}n-Zygmund kernel (OCZK) which satisfies
\begin{equation}\label{(1.3)}
\left|\int_{a<|x|<b}K(x)dx\right| \leq C, \quad0 < a < b,
\end{equation}
\begin{equation}\label{(1.4)}
|K(x)|\leq C/|x|, \quad x\neq 0,
\end{equation}
\begin{equation}\label{(1.5)}
|K(x-y)-K(x)| \leq C|y|/|x|^2, \quad|x| > 2|y|>0,
\end{equation}
with support in $ \mathbb{R}^-= (-\infty, 0)$ or $ \mathbb{R}^+ = (0, +\infty)$, where $(\ref{(1.5)})$ is named H\"{o}rmander's condition. Equation $(\ref{(1.4)})$ is also called the size condition for $K$. An interesting example is
$$K(x) = \frac{\sin(\log|x|)}{(x\log|x|)}{\chi_{(-\infty,0)}}(x),$$
where $ \chi_E$ denotes the characteristic function of a set $E$, for more details one can refer to \cite{AFM}.

The one-sided $A_p^+ $ classes not only control the boundedness of one-sided Hardy-Littlewood maximal operators, but also serve as the right weight classes for one-sided singular integral operators. Set $1 < p < \infty$ and let $K$ be an OCZK with support in $\mathbb{R}^-$. Then $T^+$ is bounded on $L^p(\omega)$ if $\omega \in A_p^+ $, see \cite{AFM}.

Fractional integral operators also play an important role in harmonic analysis.
Suppose $0<\alpha<1$. The one-sided fractional maximal operators and the one-sided fractional integrals are defined by
$$M^+_\alpha f(x) = \sup\limits_{h>0}\frac{1}{h^{1-\alpha}}\displaystyle\int_x^{ x+h}|f(y)| dy,\quad
M^-_\alpha f(x) = \sup\limits_{h>0}\frac{1}{h^{1-\alpha}}\displaystyle\int_{ x-h}^x|f(y)| dy,$$
$$I^+_\alpha f(x) =\int_x^\infty\frac{f(y)}{(y-x)^{1-\alpha}}dy,\quad I^-_\alpha f(x) =\int_{-\infty}^x\frac{f(y)}{(y-x)^{1-\alpha}}dy,$$
respectively. The one-sided fractional integrals are also called the Riemann-Liouville and Weyl fractional integral operators. For $A^+_{(p,q)}$ classes, Andersen and Sawyer \cite{AS} obtained the boundedness of $M^+_\alpha$ on weighted Lebesgue spaces.

In recent years, there are many works on convolution-type one-sided operators. In this paper, we want to study a kind of one-sided operators with nonconvolution kernels. We can adopt a definition made in a previous study to introduce  one-sided sublinear operator $\mathcal{T}^+$ and $\mathcal{T}^+_\alpha$$(0<\alpha<1)$ under certain weak size conditions, see \cite{LY,SW,SF}.
%Let us fix $D_k=2^kI$ and $A_k=D_k\setminus D_{k-1}$ for $k \in \mathbb{Z}$.
The size condition can be introduced:
\begin{equation}\label{(1.33)}
|\mathcal{T}^+f(x)| \lesssim \int_{x}^\infty\frac{|f(y)|}{y-x}dy, \quad \rm{for\:all }\:\emph x\in \r.
\end{equation}
We also define the corresponding fractional size condition as follows:
\begin{equation}\label{(1.35)}
~~~~~~~|\mathcal{T}^+_\alpha f(x)|\lesssim\int_{x}^\infty\frac{|f(y)|}{(y-x)^{1-\alpha}}dy, \quad0<\alpha<1,\,x\in \mathbb{R}.
\end{equation}
%supp$f \subseteq A_k$ and $0 \leq x_0<x\leq x_0+2^{k-1}h$
%with $k \in \mathbb{Z}$.

It is easy to check that the condition $(\ref{(1.33)})$ is satisfied by many one-sided operators, such as
$M^+$, $T^+$, one-sided dyadic Hardy-Littlewood maximal operator  \cite{LM}, one-sided oscillatory singular integrals  \cite{FLSP,FLSS}, one-sided singular integrals with H\"{o}rmander type kernels \cite{Sh}
 and so on. Both $M^+_\alpha$ and $I^+_\alpha$  satisfy $(\ref{(1.35)})$. Corresponding to \eqref{(1.33)} and \eqref{(1.35)}, we can also define
$\mathcal{T}^-$ and $\mathcal{T}^-_\alpha $. For simplicity, here we omit the corresponding parts.

We now give the endpoint estimates of $\mathcal{T}^+$ on one-sided weighted Morrey spaces.

\begin{theorem} \label{Theorem 1.12}\,\,

Let $0\leq \lambda<1$, $\omega\in A_1^+$, and the one-sided sublinear operator $\mathcal{T}^+$ satisfies $(\ref{(1.33)})$. If $\mathcal{T}^+$ is bounded from $L^1$ to $L^{1,\infty}$, then $\mathcal{T}^+$ is bounded from $L_+^{1,\lambda}(\omega)$ to $WL_+^{1,\lambda}(\omega)$.
\end{theorem}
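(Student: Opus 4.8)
The plan is to fix $x_0\in\mathbb{R}$, $h>0$ and $\gamma>0$, write $I=(x_0,x_0+h)$ and $2I=(x_0,x_0+2h)$, and split $f=f_1+f_2$ with $f_1=f\chi_{2I}$ and $f_2=f-f_1$. By sublinearity,
\[
\{x\in I:|\mathcal{T}^{+}f(x)|>\gamma\}\subseteq\{x\in I:|\mathcal{T}^{+}f_1(x)|>\gamma/2\}\cup\{x\in I:|\mathcal{T}^{+}f_2(x)|>\gamma/2\},
\]
so it suffices to bound $\gamma\,\bigl|\{x\in I:|\mathcal{T}^{+}f_i(x)|>\gamma/2\}\bigr|$ by a fixed multiple of $\Phi^{+}_{\omega,\lambda,1}(x_0,h)\,\|f\|_{L^{1,\lambda}_{+}(\omega)}=h^{\lambda-1}\omega(x_0-h,x_0)\,\|f\|_{L^{1,\lambda}_{+}(\omega)}$ for $i=1,2$, and then to take the supremum over $\gamma$ and over $x_0,h$ (we may assume $\|f\|_{L^{1,\lambda}_{+}(\omega)}<\infty$, so in particular $f_1\in L^1$). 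The only property of $\omega$ I will use is a one-sided replacement for the (unavailable) doubling condition: if $\omega\in A_1^{+}$ with $M^{-}\omega\le C\omega$, then for all $x_0$, $h>0$ and $\mu\ge1$,
\[
\int_{x_0-\mu h}^{x_0}\omega\le(C\mu+1)\int_{x_0-h}^{x_0}\omega,
\]
which follows by testing $M^{-}\omega(x)\le C\omega(x)$ at the radius $x-(x_0-\mu h)<\mu h$ for almost every $x\in(x_0-h,x_0)$ and integrating this pointwise bound over $(x_0-h,x_0)$.

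For the local term I use that $\mathcal{T}^{+}$ is of weak type $(1,1)$: together with the definition of the Morrey norm at scale $2h$,
\[
\bigl|\{x\in I:|\mathcal{T}^{+}f_1(x)|>\gamma/2\}\bigr|\lesssim\frac1\gamma\int_{x_0}^{x_0+2h}|f(y)|\,dy\le\frac1\gamma\,(2h)^{\lambda-1}\,\omega(x_0-2h,x_0)\,\|f\|_{L^{1,\lambda}_{+}(\omega)};
\]
since $2^{\lambda-1}\le1$ and, by the displayed inequality with $\mu=2$, $\omega(x_0-2h,x_0)\le(2C+1)\,\omega(x_0-h,x_0)$, multiplying by $\gamma$ gives the required estimate for $i=1$.

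For the global term, the size condition \eqref{(1.33)} gives, for every $x\in I$,
\[
|\mathcal{T}^{+}f_2(x)|\lesssim\int_{x}^{\infty}\frac{|f_2(y)|}{y-x}\,dy=\int_{x_0+2h}^{\infty}\frac{|f(y)|}{y-x}\,dy\lesssim\int_{x_0+2h}^{\infty}\frac{|f(y)|}{y-x_0}\,dy=:B,
\]
where the middle identity holds because $f_2$ vanishes on $2I=(x_0,x_0+2h)$ while the integration runs over $(x,\infty)\subseteq(x_0,\infty)$, and the last bound uses $y-x\ge\frac12(y-x_0)$ for $x\in I$, $y>x_0+2h$; note that $B$ is independent of $x$. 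Decomposing $(x_0+2h,\infty)=\bigcup_{k\ge1}[\,x_0+2^{k}h,\,x_0+2^{k+1}h\,)$, on the $k$-th annulus $y-x_0\ge2^{k}h$, and enlarging that annulus leftwards to $(x_0,x_0+2^{k+1}h)$ so that the Morrey norm applies,
\[
\int_{x_0+2^{k}h}^{x_0+2^{k+1}h}|f(y)|\,dy\le\int_{x_0}^{x_0+2^{k+1}h}|f(y)|\,dy\le(2^{k+1}h)^{\lambda-1}\,\omega(x_0-2^{k+1}h,x_0)\,\|f\|_{L^{1,\lambda}_{+}(\omega)}.
\]
Applying the growth inequality with $\mu=2^{k+1}$ turns $\omega(x_0-2^{k+1}h,x_0)$ into $\lesssim2^{k+1}\,\omega(x_0-h,x_0)$, and then $B\lesssim h^{\lambda-2}\,\omega(x_0-h,x_0)\,\|f\|_{L^{1,\lambda}_{+}(\omega)}\sum_{k\ge1}2^{k(\lambda-1)}$, where the geometric series converges exactly because $\lambda<1$. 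Thus $|\mathcal{T}^{+}f_2|\le M$ on $I$ with $M\lesssim h^{\lambda-2}\,\omega(x_0-h,x_0)\,\|f\|_{L^{1,\lambda}_{+}(\omega)}$, so $\{x\in I:|\mathcal{T}^{+}f_2(x)|>\gamma/2\}$ is empty when $\gamma>2M$ and has measure at most $|I|=h$ otherwise; in either case $\gamma\,\bigl|\{x\in I:|\mathcal{T}^{+}f_2(x)|>\gamma/2\}\bigr|\lesssim Mh\lesssim h^{\lambda-1}\,\omega(x_0-h,x_0)\,\|f\|_{L^{1,\lambda}_{+}(\omega)}$. Adding the two contributions and taking suprema finishes the proof.

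The genuinely delicate point is the mismatch of sides: $\mathcal{T}^{+}$ measures $f$ to the right of $x_0$, while $\Phi^{+}_{\omega,\lambda,1}(x_0,h)$ carries $\omega$ on the interval $(x_0-h,x_0)$ to its left, and one cannot transport $\omega$ across $x_0$ through doubling or reverse Hölder, since both fail for $A_1^{+}$ weights. This is precisely why each distant annulus must be expanded back to $x_0$ in the global estimate, and the argument then rests on the one-sided polynomial-growth bound for the $A_1^{+}$-weighted length of left intervals together with the summability of $\sum_k2^{k(\lambda-1)}$, which is where the hypothesis $\lambda<1$ is used; the remaining ingredients are the standard Morrey splitting and the weak type $(1,1)$ estimate.
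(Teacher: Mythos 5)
Your proof is correct and follows essentially the same route as the paper's: the splitting $f=f\chi_{2I}+f\chi_{(2I)^c}$, the weak $(1,1)$ bound plus one-sided doubling for the local piece, and the size condition with dyadic annuli controlled by the one-sided growth estimate $\omega(x_0-\mu h,x_0)\lesssim\mu\,\omega(x_0-h,x_0)$ (the paper's Lemma \ref{Lemma 3.2}(a) with $p=1$, which you reprove directly from $M^-\omega\le C\omega$) for the far piece. The only cosmetic differences are that you anchor the enlarged intervals at $x_0$ rather than $x_0-h$ and convert the uniform pointwise bound on $\mathcal{T}^+f_2$ into the weak-type estimate via an $L^\infty$ argument instead of Chebyshev's inequality.
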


Covering the shortage in Proposition 12 of \cite{SF}, we conclude that

\begin{theorem} \label{Theorem 6.12}\,\,
Let $0<\alpha<1$, $0\leq\beta,\lambda<1$, $1\leq p,q<\infty$, $\beta/p=\lambda/q$, $1/p=1/q+\alpha$ and
$\alpha-1/q+(\beta-1)/p+1<0$.\\
{\rm{(i)}} If $1<p<\infty$ and $\mathcal{T}^+_\alpha$ is bounded from $L^p$ to $L^q$, then $\mathcal{T}^+_\alpha$ is bounded from $L^{p,\beta}_+(\omega^p)$ to $L_+^{q,\lambda}(\omega^q)$.\\
{\rm{(ii)}} If $p=1$ and $\mathcal{T}^+_\alpha$ is bounded from $L^1$ to $L^{q,\infty}$, then $\mathcal{T}^+_\alpha$ is bounded from $L^{1,\beta}_{+}(\omega)$ to $WL^{q,\lambda}_{+}(\omega^q)$.
\end{theorem}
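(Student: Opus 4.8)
The plan is to treat both parts simultaneously by the usual local/tail decomposition, taking care of the one-sided geometry and of the weight built into the norm. Fix $x_{0}\in\mathbb{R}$ and $h>0$ and set $I=(x_{0},x_{0}+h)$. Since $\mathcal{T}^{+}_{\alpha}$ satisfies the size bound $(\ref{(1.35)})$, the value $\mathcal{T}^{+}_{\alpha}f(y)$ for $y\in I$ depends only on $f$ restricted to $(x_{0},\infty)$, so I split $f=f_{1}+f_{2}$ with $f_{1}=f\chi_{(x_{0},x_{0}+2h)}$ and $f_{2}=f\chi_{(x_{0}+2h,\infty)}$ and use sublinearity, $|\mathcal{T}^{+}_{\alpha}f|\le|\mathcal{T}^{+}_{\alpha}f_{1}|+|\mathcal{T}^{+}_{\alpha}f_{2}|$. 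For part (i) it then suffices to prove, uniformly in $x_{0}$ and $h$,
$$\int_{x_{0}}^{x_{0}+h}|\mathcal{T}^{+}_{\alpha}f_{j}(y)|^{q}\,dy\;\lesssim\;\Phi^{+}_{\omega,\lambda,q}(x_{0},h)\,\|f\|^{q}_{L^{p,\beta}_{+}(\omega^{p})},\qquad j=1,2,$$
and for part (ii) the same estimate with $\int_{x_{0}}^{x_{0}+h}|\mathcal{T}^{+}_{\alpha}f_{j}|^{q}$ replaced by $\sup_{\gamma>0}\gamma^{q}\,|\{y\in I:|\mathcal{T}^{+}_{\alpha}f_{j}(y)|>\gamma\}|$ and $\omega^{p}$ replaced by $\omega$.

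For the local piece $f_{1}$ I use the hypothesis on $\mathcal{T}^{+}_{\alpha}$ directly: in (i), $\int_{I}|\mathcal{T}^{+}_{\alpha}f_{1}|^{q}\le\|\mathcal{T}^{+}_{\alpha}f_{1}\|_{L^{q}}^{q}\lesssim\|f_{1}\|_{L^{p}}^{q}=\big(\int_{x_{0}}^{x_{0}+2h}|f|^{p}\big)^{q/p}$, and in (ii) the weak $(1,q)$ bound gives $\sup_{\gamma}\gamma^{q}|\{y\in I:|\mathcal{T}^{+}_{\alpha}f_{1}(y)|>\gamma\}|\lesssim\big(\int_{x_{0}}^{x_{0}+2h}|f|\big)^{q}$. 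Testing the definition of the Morrey norm on the interval $(x_{0},x_{0}+2h)$ bounds $\int_{x_{0}}^{x_{0}+2h}|f|^{p}$ by $(2h)^{\beta-1}\omega^{p}(x_{0}-2h,x_{0})\,\|f\|^{p}_{L^{p,\beta}_{+}(\omega^{p})}$ (and similarly when $p=1$). It then remains to verify the numerical inequality $\big((2h)^{\beta-1}\omega^{p}(x_{0}-2h,x_{0})\big)^{q/p}\lesssim h^{\lambda-1}\omega^{q}(x_{0}-h,x_{0})$; taking $q$-th roots, the relations $\beta/p=\lambda/q$ and $1/p=1/q+\alpha$ identify the $h$-powers up to a factor $h^{\alpha}$, which is absorbed by the elementary inequality $\|\omega\|_{L^{p}(J)}\le|J|^{\alpha}\|\omega\|_{L^{q}(J)}$ (H\"older, valid since $p\le q$) together with a comparison of the $\omega^{q}$-mass of $(x_{0}-2h,x_{0})$ with that of $(x_{0}-h,x_{0})$; the latter is where the one-sided $A^{+}$-structure of $\omega$ must be used, the ordinary doubling condition being unavailable.

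The tail is where the real work lies. For $y\in I$ and $z>x_{0}+2h$ one has $z-y>\tfrac12(z-x_{0})$, so $(\ref{(1.35)})$ yields a bound on $|\mathcal{T}^{+}_{\alpha}f_{2}(y)|$ independent of $y\in I$:
$$|\mathcal{T}^{+}_{\alpha}f_{2}(y)|\;\lesssim\;\int_{x_{0}+2h}^{\infty}\frac{|f(z)|}{(z-x_{0})^{1-\alpha}}\,dz\;\lesssim\;\sum_{k\ge1}(2^{k}h)^{\alpha-1}\int_{x_{0}+2^{k}h}^{x_{0}+2^{k+1}h}|f(z)|\,dz.$$
Each dyadic-annulus integral is estimated by H\"older's inequality (trivially when $p=1$) and then, by testing the Morrey norm on the interval $(x_{0},x_{0}+2^{k+1}h)$, by $(2^{k+1}h)^{(\beta-1)/p}\big(\omega^{p}(x_{0}-2^{k+1}h,x_{0})\big)^{1/p}\|f\|_{L^{p,\beta}_{+}(\omega^{p})}$. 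After substituting, consolidating the powers of $2^{k}$ and of $h$ via the exponent identities, and using the one-sided $A^{+}$-structure of $\omega$ to keep the averages $\omega^{p}(x_{0}-2^{k+1}h,x_{0})$ from growing too fast, one is left with a series whose convergence is supplied precisely by the hypothesis $\alpha-1/q+(\beta-1)/p+1<0$; summing it and comparing the weight factors against $\omega^{q}(x_{0}-h,x_{0})$ gives $|\mathcal{T}^{+}_{\alpha}f_{2}(y)|^{q}\lesssim h^{-1}\Phi^{+}_{\omega,\lambda,q}(x_{0},h)\|f\|^{q}$ for every $y\in I$, hence $\int_{I}|\mathcal{T}^{+}_{\alpha}f_{2}|^{q}\lesssim\Phi^{+}_{\omega,\lambda,q}(x_{0},h)\|f\|^{q}$; and since this is an $L^{\infty}(I)$ bound it also gives $\sup_{\gamma}\gamma^{q}|\{y\in I:|\mathcal{T}^{+}_{\alpha}f_{2}(y)|>\gamma\}|\le h\,\|\mathcal{T}^{+}_{\alpha}f_{2}\|_{L^{\infty}(I)}^{q}$, which disposes of part (ii) in the same way. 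The main obstacle — and exactly the gap in Proposition~12 of \cite{SF} that this theorem is meant to close — is this tail bookkeeping: one-sided weights obey neither the ordinary doubling condition nor the reverse H\"older inequality, so matching the weighted dyadic-tail averages of $f$ against the single target weight $\omega^{q}(x_{0}-h,x_{0})$ is delicate, and the condition $\alpha-1/q+(\beta-1)/p+1<0$ is exactly what guarantees the tail series converges in this one-sided setting.
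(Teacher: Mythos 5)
Your proposal is correct and follows essentially the same route as the paper: the splitting $f=f_1+f_2$ at $(x_0,x_0+2h)$, the $L^p\to L^q$ (resp.\ weak $(1,q)$) bound plus H\"older and one-sided doubling for the local part, and the dyadic tail estimate whose series converges exactly under $\alpha-1/q+(\beta-1)/p+1<0$. The one ingredient you invoke only qualitatively --- that the one-sided structure keeps $\omega^q(x_0-2^{k+1}h,x_0)\lesssim 2^{kq}\,\omega^q(x_0-h,x_0)$ --- is precisely Lemma \ref{Lemma 3.2}, which the paper establishes beforehand, so your exponent bookkeeping matches the paper's.
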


Some basic properties of one-sided $A_p$ classes will be introduced. As mentioned above, one-sided $A_p$ weights do not satisfy doubling condition. So, we give the corresponding one-sided doubling condition appeared in \cite{MOD}.
\begin{lemma}\label{Lemma 3.1}\,\
Let $\omega\in A_p^+\; (p\geq1)$. For all $x_0\in \mathbb{R}$ and $h>0$. then
\begin{equation*}\label{(2.1)}
\int _{x_0-h}^{x_0+h}\omega  \lesssim \int _{x_0}^{x_0+h}\omega.
 \end{equation*}
\end{lemma}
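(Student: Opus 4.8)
The plan is to reduce the two-sided integral to the one-sided one by writing $\int_{x_0-h}^{x_0+h}\omega = \int_{x_0-h}^{x_0}\omega + \int_{x_0}^{x_0+h}\omega$, so that it suffices to prove the single inequality $\int_{x_0-h}^{x_0}\omega \lesssim \int_{x_0}^{x_0+h}\omega$, with the implicit constant depending only on $p$ and $A_p^+(\omega)$. I would treat the cases $p=1$ and $p>1$ separately, since the defining condition has a different form in each.

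For $p=1$, I would invoke $M^-\omega \le C\omega$ a.e. Fix $x\in(x_0,x_0+h)$; since $x-(x_0-h)<2h$ and $\int_{x_0-h}^x\omega\ge\int_{x_0-h}^{x_0}\omega$, the definition of $M^-$ gives $M^-\omega(x)\ge \frac{1}{x-(x_0-h)}\int_{x_0-h}^x\omega\ge \frac{1}{2h}\int_{x_0-h}^{x_0}\omega$. Hence $\frac{1}{2h}\int_{x_0-h}^{x_0}\omega\le C\,\omega(x)$ for a.e.\ $x\in(x_0,x_0+h)$; integrating this inequality in $x$ over $(x_0,x_0+h)$ yields $\tfrac12\int_{x_0-h}^{x_0}\omega\le C\int_{x_0}^{x_0+h}\omega$, which is what we want.

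For $p>1$, I would test the $A_p^+$ condition on the triple $a=x_0-h<b=x_0<c=x_0+h$, which gives $\int_{x_0-h}^{x_0}\omega \le A_p^+(\omega)\,(2h)^p\bigl(\int_{x_0}^{x_0+h}\omega^{1-p'}\bigr)^{1-p}$. It then remains to absorb the negative power of $\int_{x_0}^{x_0+h}\omega^{1-p'}$; for this I would apply H\"older's inequality on $(x_0,x_0+h)$ with exponents $p$ and $p'$ to the splitting $1=\omega^{1/p}\cdot\omega^{-1/p}$, using the identity $-p'/p=1-p'$, to obtain $h^p\le\bigl(\int_{x_0}^{x_0+h}\omega\bigr)\bigl(\int_{x_0}^{x_0+h}\omega^{1-p'}\bigr)^{p-1}$, i.e.\ $h^p\bigl(\int_{x_0}^{x_0+h}\omega^{1-p'}\bigr)^{1-p}\le \int_{x_0}^{x_0+h}\omega$. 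Combining the two displays gives $\int_{x_0-h}^{x_0}\omega\le 2^p A_p^+(\omega)\int_{x_0}^{x_0+h}\omega$, and adding $\int_{x_0}^{x_0+h}\omega$ to both sides completes the proof.

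I do not expect a deep obstacle here; the only point deserving a word of care is the degenerate case $\int_{x_0}^{x_0+h}\omega^{1-p'}=\infty$ in the $p>1$ argument, where the factor $(\,\cdot\,)^{1-p}$ vanishes and the $A_p^+$ bound already forces $\int_{x_0-h}^{x_0}\omega=0$; the opposite degeneracy cannot occur because an $A_p^+$ weight (hence also $\omega^{1-p'}$) is locally integrable and positive a.e. The real content is simply the observation that the one-sided Muckenhoupt condition, applied to a symmetric triple, already encodes the comparability of adjacent intervals of equal length.
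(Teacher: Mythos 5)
Your proof is correct. The paper itself does not prove Lemma \ref{Lemma 3.1} at all --- it simply states the one-sided doubling condition and attributes it to the reference \cite{MOD} --- so there is no in-paper argument to compare against; your two-case derivation (testing $A_1^+$ via $M^-\omega\le C\omega$ pointwise and then integrating, versus testing the $A_p^+$ condition on the symmetric triple $x_0-h<x_0<x_0+h$ and absorbing the $\bigl(\int_{x_0}^{x_0+h}\omega^{1-p'}\bigr)^{1-p}$ factor by H\"older) is the standard one and supplies exactly the details the paper omits. Your side remark on the degenerate case is also fine: since $\omega>0$ a.e., the $A_p^+$ inequality itself forces $\int_b^c\omega^{1-p'}<\infty$ for every $b<c$, so the degeneracy never arises.
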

In \cite{SF}, Shi and Fu studied the equivalence for one-sided double weight functions.
\begin{lemma}\label{Lemma 7.2}\,\,
Suppose $0<\alpha<1$, $1<p<q<\infty$, and $1/p-1/q=\alpha$. Then the following statements are equivalent:\\
  $\mathrm{(a)}$\,\, $\omega\in A^+_{(p,q)}$.\\
  $\mathrm{(b)}$\,\, $\omega^q \in A_{q(1-\alpha)}^+$.
\end{lemma}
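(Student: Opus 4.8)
The statement to prove is Lemma~\ref{Lemma 7.2}: for $0<\alpha<1$, $1<p<q<\infty$, $1/p-1/q=\alpha$, the condition $\omega\in A^+_{(p,q)}$ is equivalent to $\omega^q\in A^+_{q(1-\alpha)}$.

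My plan is to unwind both definitions and show that the two supremum conditions are literally the same condition after an algebraic identification of exponents. First I would set $s:=q(1-\alpha)$ and check the exponent bookkeeping: since $1/p-1/q=\alpha$, we have $1-\alpha=1/p+1/q-2/q+\alpha\cdot 0$... more cleanly, $1-\alpha = 1 - 1/p + 1/q$, so $s = q(1-1/p+1/q) = q - q/p + 1$. The dual exponent of $s$ is $s' = s/(s-1)$, and the key computation is to verify that $(\omega^q)^{1-s'} = \omega^{-p'}$, i.e. that $q(1-s') = -p'$. Indeed $1-s' = 1 - s/(s-1) = -1/(s-1)$, so $q(1-s') = -q/(s-1)$, and one checks $s-1 = q - q/p = q(1-1/p) = q/p'$, hence $-q/(s-1) = -q/(q/p') = -p'$. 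So the ``$\omega^{1-s'}$'' factor in the $A_s^+$ condition is exactly $\omega^{-p'}$.

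Next I would compare the powers on the interval length. The $A^+_{(p,q)}$ condition reads
$$\frac{1}{(c-a)^{1-\alpha}}\Big(\int_a^b\omega^q\Big)^{1/q}\Big(\int_b^c\omega^{-p'}\Big)^{1/p'}<C,$$
and raising to the power $s$ and using $s/q = 1-\alpha$ (since $s=q(1-\alpha)$) and $s/p' = (s-1)$ (since $s-1=q/p'$ and $s/p' = q(1-\alpha)/p'$; one checks $s/p' = s-1$ directly from $s-1 = s/p' + s/q - 1 = \ldots$), this becomes
$$\frac{1}{(c-a)^{s(1-\alpha)}}\Big(\int_a^b\omega^q\Big)^{s/q}\Big(\int_b^c\omega^{-p'}\Big)^{s/p'} = \frac{1}{(c-a)^{s(1-\alpha)}}\Big(\int_a^b\omega^q\Big)^{1-\alpha}\Big(\int_b^c(\omega^q)^{1-s'}\Big)^{s-1}.$$
Comparing with the $A_s^+$ condition $\frac{1}{(c-a)^s}\int_a^b\omega^q\big(\int_b^c(\omega^q)^{1-s'}\big)^{s-1}<C$, I need the length power to match: $s(1-\alpha)$ versus $s$, and the power on $\int_a^b\omega^q$: $1-\alpha$ versus $1$. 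These do not match termwise, so the equivalence is not a literal identity; instead I would use the standard trick that in both the ``$A^+_{(p,q)}$ with gap $\alpha$'' and the ``$A^+_s$'' formulations one can absorb the mismatch because $1-\alpha$ and the exponent structure are tied through $1/p-1/q=\alpha$. Concretely, I expect the cleanest route is: rewrite $A^+_{(p,q)}$ by noting $(c-a)^{1-\alpha} = (c-a)^{1/q}(c-a)^{1/p'}$ is \emph{not} quite right either, so the honest approach is to raise the $A^+_{(p,q)}$ quantity to the power $q$, obtaining $\frac{1}{(c-a)^{q(1-\alpha)}}\int_a^b\omega^q\big(\int_b^c\omega^{-p'}\big)^{q/p'}$, and then observe $q/p' = q-1 - \alpha q + \ldots$; checking $q/p' = s-1 = q/p'$ — yes this is an identity — so this equals $\frac{1}{(c-a)^s}\int_a^b\omega^q\big(\int_b^c(\omega^q)^{1-s'}\big)^{s-1}$, which is exactly $A^+_s(\omega^q)$. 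So after raising the $A^+_{(p,q)}$ constant to the $q$-th power, the two conditions coincide verbatim, and finiteness of one is equivalent to finiteness of the other.

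The bulk of the work is therefore the exponent arithmetic; the one subtle point — and the step I would be most careful about — is confirming that $q(1-\alpha) = s$ together with $q/p' = s-1$ are both consequences of the single relation $1/p - 1/q = \alpha$ (equivalently $1/p' = 1 - 1/p = 1 - \alpha - 1/q$, so $q/p' = q - \alpha q - 1$, and $s-1 = q(1-\alpha)-1 = q - \alpha q - 1$, matching). Once that is pinned down, I would also handle the boundary/degenerate cases: I should note $s=q(1-\alpha)>1$ because $q(1-\alpha) = q/p \cdot (p - \alpha p) $ hmm — simply $q(1-\alpha) > 1 \iff 1-\alpha > 1/q \iff 1/p > 1/q - \alpha + \ldots$; from $1/p = 1/q + \alpha$ we get $1-\alpha - 1/q = 1 - 1/p = 1/p' > 0$, so $s>1$ and the $A_s^+$ class is well-defined. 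Since $p>1$ strictly here, the $p=1$ endpoint of $A^+_{(p,q)}$ need not be addressed. I would write this up as a short two-way chain of equalities rather than two separate implications, since the argument is genuinely reversible at every step.
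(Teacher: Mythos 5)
Your proof is correct. The paper itself gives no proof of this lemma --- it is quoted from Shi and Fu \cite{SF} --- so there is nothing to compare against, but your direct unwinding of the two definitions is the standard argument and the exponent arithmetic checks out: with $s=q(1-\alpha)$ one has $s-1=q/p'>0$ (so $s>1$ and $A_s^+$ makes sense), $q(1-s')=-q/(s-1)=-p'$ so that $(\omega^q)^{1-s'}=\omega^{-p'}$, and raising the $A^+_{(p,q)}$ quantity to the $q$-th power turns $(c-a)^{-q(1-\alpha)}$ into $(c-a)^{-s}$ and $\bigl(\int_b^c\omega^{-p'}\bigr)^{q/p'}$ into $\bigl(\int_b^c(\omega^q)^{1-s'}\bigr)^{s-1}$, giving verbatim the $A_s^+(\omega^q)$ condition with constants related by the power $q$. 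The only criticism is editorial: your writeup contains two abandoned false starts (raising to the power $s$ instead of $q$, and the aborted factorization of $(c-a)^{1-\alpha}$) that should be deleted, since the final computation renders them moot and their presence makes the argument look less certain than it is.
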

\begin{remark}\label{rem2.1}\,\,
If $\omega\in A^+_{(p,q)}$ with $1\leq p<q<\infty$, we obtain

$$\int _{x_0-h}^{x_0+h}\omega^q \lesssim \int _{x_0}^{x_0+h}\omega^q$$
for all $x_0\in \mathbb{R}$ and $h>0$ by Lemma \ref{Lemma 3.1} and Lemma \ref{Lemma 7.2}.
\end{remark}
\begin{lemma} \label{Lemma 3.2}\,\,
Let $k\in \mathbb{N}$, $x_0\in \mathbb{R}$, $h>0$ and $p, q \geq 1$. Then\\

 $\mathrm{(a)}$\,\, if $\omega \in A_p^+$, we have
 $$\int_{x_0-h-2^kh}^{x_0-h}\omega\lesssim 2^{kp}\int_{x_0-h}^{x_0}\omega.$$

 $\mathrm{(b)}$\,\, if $\omega \in A_{(p,q)}^+$ with $1<p<\infty$, we have
 $$\int_{x_0-h-2^kh}^{x_0-h}\omega^q \lesssim 2^{kq}\int_{x_0-h}^{x_0}\omega^q.$$

 $\mathrm{(c)}$\,\, if $\omega \in A_{(1,q)}^+$, we have
 $$\int_{x_0-h-2^kh}^{x_0-h}\omega^q \lesssim 2^{k}\int_{x_0-h}^{x_0}\omega^q.$$
\end{lemma}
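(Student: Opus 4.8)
The plan is to prove part (a) directly from the definition of $A_p^+$, and then to deduce (b) and (c) from (a) by feeding in the equivalences for one-sided $A_1^+$ and $A_{(p,q)}^+$ weights (Lemma \ref{Lemma 7.2} and the very definition of $A_{(1,q)}^+$). Throughout I would place the ``good'' interval $(x_0-h,x_0)$ on the right-hand end of the triple of points, so that the one-sided structure works in our favour.

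For (a) with $1<p<\infty$, I would apply the $A_p^+$ inequality to the ordered triple $a=x_0-h-2^kh<b=x_0-h<c=x_0$, whose total length is $c-a=(2^k+1)h$, obtaining
$$\int_{x_0-h-2^kh}^{x_0-h}\omega\ \le\ A_p^+(\omega)\,\frac{\big((2^k+1)h\big)^p}{\Big(\int_{x_0-h}^{x_0}\omega^{1-p'}\Big)^{p-1}}.$$
To eliminate the $\omega^{1-p'}$-integral in the denominator I would use H\"older's inequality on $h=\int_{x_0-h}^{x_0}\omega^{1/p}\omega^{-1/p}\,dy$, which gives $h^{p}\le\big(\int_{x_0-h}^{x_0}\omega\big)\big(\int_{x_0-h}^{x_0}\omega^{1-p'}\big)^{p-1}$, hence $\big(\int_{x_0-h}^{x_0}\omega^{1-p'}\big)^{-(p-1)}\le h^{-p}\int_{x_0-h}^{x_0}\omega$; substituting and using $(2^k+1)^p\le 2^p2^{kp}$ finishes this case. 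For the endpoint $p=1$ the condition $M^-\omega\le C\omega$ must be used instead: for a.e.\ $x\in(x_0-h,x_0)$,
$$\int_{x_0-h-2^kh}^{x_0-h}\omega\ \le\ \int_{x_0-h-2^kh}^{x}\omega\ \le\ C\big(x-(x_0-h-2^kh)\big)\,\omega(x)\ \le\ C(2^k+1)h\,\omega(x),$$
and integrating over $x\in(x_0-h,x_0)$ and dividing by $h$ gives the factor $2^k$.

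For (b), note that $\omega\in A_{(p,q)}^+$ with $1<p<\infty$ forces $1<p<q<\infty$ and $\alpha=1/p-1/q\in(0,1)$, so Lemma \ref{Lemma 7.2} yields $\omega^q\in A_{q(1-\alpha)}^+$; since $q(1-\alpha)=q/p'+1>1$, part (a) applies to the weight $\omega^q$ with exponent $q(1-\alpha)$, and because $q(1-\alpha)<q$ one has $2^{kq(1-\alpha)}\le 2^{kq}$, which is the claimed bound. For (c), the definition $A_{(1,q)}^+:M^-\omega^q\lesssim\omega^q$ is exactly the statement $\omega^q\in A_1^+$, so part (a) with $p=1$ applied to $\omega^q$ produces the factor $2^k$ directly.

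The only step requiring genuine care is the geometry in (a): the $A_p^+$ (resp.\ $A_1^+$) condition is asymmetric in its three points, controlling a weight average over the \emph{left} subinterval by data on the \emph{right} subinterval, so one must set $b=x_0-h$, $c=x_0$ in order for the estimate to run in the direction $\int_{\text{far left}}\lesssim\int_{\text{near right}}$ rather than the reverse — this is precisely the one-sided feature that replaces the (unavailable) doubling of $\omega$. Everything else is a routine computation.
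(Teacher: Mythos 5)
Your proof is correct, and it is worth noting where it diverges from the paper's. For part (a) with $p>1$ and for part (b), the paper simply defers to \cite[Proposition 12]{SF}, whereas you give a self-contained argument: the $A_p^+$ condition applied to the triple $x_0-h-2^kh<x_0-h<x_0$ combined with the H\"older bound $h^{p}\le\big(\int_{x_0-h}^{x_0}\omega\big)\big(\int_{x_0-h}^{x_0}\omega^{1-p'}\big)^{p-1}$ is exactly the right elementary computation, and deducing (b) from (a) via Lemma \ref{Lemma 7.2} (using $\omega^q\in A^+_{q(1-\alpha)}$ with $q(1-\alpha)=1+q/p'\le q$) unifies (b) and (c) as instances of (a), which is tidier than treating (b) separately. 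For the $p=1$ case of (a), the paper proves a weighted mean-value inequality $|f_{I^-}|\lesssim\omega(I^-_-)^{-1}\int_{I^-}|f|\omega$ (using $\tfrac1h\omega(I^-_-)\lesssim M^-\omega(x)\lesssim\omega(x)$ for $x\in I^-$) and then specializes $f=\chi_{I^-}$ on the dilated interval $(2^kI)^-$; your version integrates the pointwise bound $\int_{x_0-h-2^kh}^{x}\omega\le C(2^k+1)h\,\omega(x)$ directly over $x\in(x_0-h,x_0)$. The two arguments rest on the same inequality $\tfrac1t\int_{x-t}^{x}\omega\le M^-\omega(x)\lesssim\omega(x)$, but yours reaches the conclusion in one step and does not need Lemma \ref{Lemma 3.1}. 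Part (c) is handled identically in both. Your closing remark about the asymmetry of the $A_p^+$ condition --- that it transfers mass from the far-left interval to the near-right one, which is precisely what substitutes for the missing doubling property --- correctly identifies the one genuinely one-sided point of the lemma.
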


\begin{proof}
For the proof of (b) and the case of $p>1$ in (a), we can refer to \cite[Proposition 12]{SF}. When $p=1$, we claim that
\begin{equation}\label{(2.9)}
|f_{I^-}|\lesssim  \frac{1}{\omega(I^{-}_{-})}\int_{I^-} |f(x)|\omega(x)dx.
 \end{equation}
In fact, for $x\in I^-$, we have
$$\frac{1}{h}\int_{x_0-2h}^{x_0-h}\omega(y)dy\leq \frac{1}{h}\int_{x-2h}^{x}\omega(y)dy.$$
Since $\omega\in A_1^{+}$, by Lemma \ref{Lemma 3.1}, we get
\begin{equation*}\begin{split}
\frac{1}{h}\int_{x-2h}^{x}\omega(y)dy&=\frac{1}{h}\int_{(x-h)-h}^{(x-h)+h}\omega(y)dy
\lesssim \frac{1}{h}\int_{x-h}^{x}\omega(y)dy
\lesssim M^-\omega(x)\lesssim \omega(x).
\end{split}\end{equation*}
Thus
$$\frac{1}{h}\omega(I_{-}^{-})=\frac{1}{h}\int_{x_0-2h}^{x_0-h}\omega(y)dy
\lesssim M^-\omega(x)\lesssim \omega(x).
$$
Consequently,
\begin{align*}
\frac{1}{|I^-|}\int_{I^-}|f(x)|dx=&\frac{1}{|I^-|}\int_{I^-}|f(x)|\omega(x)\omega(x)^{-1}dx \\
\lesssim&\frac{1}{|I^-|}\int_{I^-}|f(x)|\omega(x)\frac{1}{M^-\omega(x)}dx\\
\lesssim& \frac{1}{\omega(I^{-}_{-})}\int_{I^-}|f(x)|\omega(x)dx.
\end{align*}
Replace $I^-$ with $(2^k I)^-$ and then set $f=\chi_{I^-}$, we have
\begin{align*}
\omega((2^k I)^{-}_{-})\lesssim {2^k}\omega(I^-).
\end{align*}

(c) for $\omega \in A_{(1,q)}^+$, we obtain  $\omega^q \in A^+_1 $.
Then by (a), we deduce $\omega^q((2^k I)^{-}_{-}) \lesssim {2^k} \omega^q(I^-).$
\end{proof}

\begin{proof}[Proof of Theorem $\ref{Theorem 1.12}$]
For any $x_0\in\mathbb{R},h>0$ and $\gamma>0$, it suffices to prove
$$\frac{\gamma}{\Phi_{\omega,\lambda,1}^{+}(x_0,h)}\big|\{x\in(x_0,x_0+h):|\mathcal{T}^+f(x)|>\gamma \}\big| \lesssim\|f\|_{L^{1,\lambda}_{+}(\omega)}.$$
Set $f=f_1+f_2 := f\chi_{2I}+f\chi_{({2I})^c}$ to obtain
\begin{align*}
&\frac{\gamma}{\Phi_{\omega,\lambda,1}^{+}(x_0,h)}\big|\{x\in(x_0,x_0+h):|\mathcal{T}^+f(x)|
>\gamma \}\big|\\
&\quad\leq\frac{\gamma}{\Phi_{\omega,\lambda,1}^{+}(x_0,h)}\Big|\Big\{x\in(x_0,x_0+h):|\mathcal{T}^+f_1(x)|
>\frac{\gamma}{2} \Big\}\Big|\\
&\quad\quad+\frac{\gamma}{\Phi_{\omega,\lambda,1}^{+}(x_0,h)}\Big|\Big\{x\in(x_0,x_0+h):
|\mathcal{T}^+f_2(x)|>\frac{\gamma}{2} \Big\}\Big|\\
&\quad=:J_1+J_2.
\end{align*}
Since $\mathcal{T}^+$ is of weak type $(1,1)$, we get
\begin{align*}
J_1  \lesssim \frac{1}{\Phi_{\omega,\lambda,1}^{+}(x_0,h)}\int_{x_0}^{x_0+2h}|f(x)|dx
\lesssim \frac{\omega(x_0-2h,x_0)}{\omega(x_0-h,x_0)}\|f\|_{L^{1,\lambda}_{+}(\omega)}
\lesssim \|f\|_{L^{1,\lambda}_{+}(\omega)},
\end{align*}
where the last inequality follows from Lemma \ref{Lemma 3.1}.
By $(\ref{(1.33)})$, for $x_0<x<x_0+h$, we have
\begin{align*}
|\mathcal{T}^+{f_2}(x)|
\lesssim& \int_{x}^{\infty}\frac{|f_2(y)|}{y-x}dy\\
\lesssim& \sum_{k=1}^\infty\frac{1}{2^{k-1}h}\int_{x_0+2^{k-1}h}^{{x_0+2^{k}h}}|f(y)|dy\\
\lesssim& \|f\|_{L^{1,\lambda}_{+}(\omega)}\sum_{k=1}^{\infty}\frac{{{(2^k}h)}^{\lambda-1}}{{2^k}h}\omega(x_0-h-2^{k+1} h,x_0-h)\\
\lesssim&
\|f\|_{L^{1,\lambda}_{+}(\omega)}\sum_{k=1}^{\infty}\frac{\omega(x_0-h-2^{k+1} h,x_0-h) }{{({2^{k}}h)}^{2-\lambda}}.
\end{align*}
Thus,
  we control $J_2$ by
\begin{align*}
J_2 \lesssim & \frac{1}{\Phi_{\omega,\lambda,1}^{+}(x_0,h)}\int_{x_0}^{x_0+h}|\mathcal{T}^+ {f_2}(x)|dx\\
\lesssim  &\frac{\|f\|_{L^{1,\lambda}_{+}(\omega)}}{{\Phi_{\omega,\lambda,1}^{+}(x_0,h)}}
\int_{x_0}^{x_0+h}\sum_{k=1}^{\infty}\frac{\omega(x_0-h-2^{k+1} h,x_0-h) }{{({2^{k}}h)}^{2-\lambda}}dx\\
\lesssim  & \|f\|_{L^{1,\lambda}_{+}(\omega)}  \sum_{k=1}^{\infty}\frac{1}{2^{k(2-\lambda)}}\frac{\omega(x_0-h-2^{k +1} h,x_0-h)}{\omega(x_0-h,x_0)}\\
\lesssim  &\|f\|_{L^{1,\lambda}_{+}(\omega)}  \sum_{k=1}^{\infty}\frac{1}{2^{k(1-\lambda)}}\\
\lesssim  &\|f\|_{L^{1,\lambda}_{+}(\omega)},
\end{align*}
where (a) of the Lemma \ref{Lemma 3.2} is used in  the fourth inequality. This completes the proof of Theorem \ref{Theorem 1.12}.
\end{proof}
\begin{proof}[Proof of Theorem  $\ref{Theorem 6.12}$] (i)
Let $f=f_1+f_2 := f\chi_{2I}+f\chi_{({2I})^c}$. Then
\begin{align*}
&\left[\frac{1}{\Phi_{\omega,\lambda,q}^{+}(x_0,h)}\int_{x_0}^{x_0+h}|\mathcal{T}^+_\alpha f(x)|^q dx\right]^{1/q}\\
&\quad\leq\left[\frac{1}{\Phi_{\omega,\lambda,q}^{+}(x_0,h)}\int_{x_0}^{x_0+h}|\mathcal{T}^+_\alpha f_1(x)|^q dx\right]^{1/q}\\
&\quad\quad+\left[\frac{1}{\Phi_{\omega,\lambda,q}^{+}(x_0,h)}\int_{x_0}^{x_0+h}|\mathcal{T}^+_\alpha f_2(x)|^q dx\right]^{1/q}\\
&\quad=: K_1+K_2.
\end{align*}
The fact that $\mathcal{T}^+_\alpha$ is  bounded from $L^p$ to $L^q$ implies that
\begin{align*}
K_1 \lesssim& \frac{1}{{\Phi_{\omega,\lambda,q}^{+}(x_0,h)}^{1/q}}\left[\int _{x_0}^{x_0+2h}|f(y)|^pdy \right]^{1/p}
\end{align*}
\begin{align*}
\lesssim&\frac{\|f\|_{L^{p,\beta}_+(\omega^p)}}{h^{1/p-1/q}}\frac{[\omega^p(x_0-2h,x_0)]^{1/p}}
{\left[\omega^q(x_0-h,x_0)\right]^{1/q}}\\
\lesssim&\|f\|_{L^{p,\beta}_+(\omega^p)}\left[\frac{\omega^q(x_0-2h,x_0)}{\omega^q(x_0-h,x_0)}
\right]^{1/q}\\
\lesssim &\|f\|_{L^{p,\beta}_+(\omega^p)},
\end{align*}
where the third inequality follows from H\"{o}lder's inequality and the last inequality is due to Remark \ref{rem2.1}.

Invoking (\ref{(1.35)}), we get
\begin{align*}
|\mathcal{T}^+_\alpha{f_2}(x)|
\lesssim&\int_{x}^{\infty}\frac{|f_2(y)|}{(y-x)^{1-\alpha}}dy\\
\lesssim&\sum_{k=1}^{\infty}\frac{1}{{({2^{k-1}}h)}^{1-\alpha}}\int_{x_0+2^{k-1}h}
^{{x_0+2^{k}h}}|f(y)|dy\\
\lesssim&
\sum_{k=1}^{\infty}\frac{1}{{({2^k}h)}^{1-\alpha}}\left[\int_{x_0-h}^{{x_0-h+2^{k+1}h}}
|f(y)|^pdy\right]^{1/p}{(2^k h)^{1-1/p}}\\
\lesssim&
\sum_{k=1}^{\infty}\frac{\left[\omega^p(x_0-h-2^{k+1}h,x_0-h)\right]^{1/p}}{{({2^k}h)}
^{1/p-\alpha-(\beta-1)/p}}\\
&\quad\times\left[\frac{(2^{k+1}h)^{1-\beta}}{\omega^p(x_0-h-2^{k+1}h,x_0-h)}\int _{x_0-h}^{x_0-h+2^{k+1}h}|f(y)|^p dy\right]^{1/p}\\
\lesssim&\|f\|_{L^{p,\beta}_+(\omega^p)}\sum_{k=1}^{\infty}\frac{\left[\omega^p
(x_0-h-2^{k+1}h,x_0-h)\right]^{1/p}}{{({2^k} h)}^{1/p-\alpha-(\beta-1)/p}}.
\end{align*}
Substituting the above estimate into $K_2$, we obtain
\begin{align*}
K_2\lesssim&
\left\{\frac{\|f\|^{q}_{L^{p,\beta}_{+}(\omega^p)}}{{\Phi_{\omega,\lambda,q}^{+}(x_0,h)}} \int_{x_0}^{x_0+h}\bigg \{\sum_{k=1}^{\infty}
\frac{\left[\omega^p(x_0-h-2^{k+1}h,x_0-h)\right]^{1/p}}{{({2^k} h)}^{1/p-\alpha-(\beta-1)/p}}\bigg\}^qdx\right\}^{1/q}
\\
\lesssim& \|f\|_{L^{p,\beta}_+(\omega^p)} h^{(2-\lambda)/q} \sum_{k=1}^\infty
({2^k}h)^{\alpha-\frac{1}{p}+\frac{\beta-1}{p}}\frac{\left[\omega^p ({x_0-h-2^{k+1}h},{x_0-h})\right]^{1/p}}{\left[\omega^q (x_0-h,x_0)\right]^{1/q}}\\
\lesssim&
 \|f\|_{L^{p,\beta}_+(\omega^p)} \sum_{k=1}^\infty 2^{k(\alpha-\frac{1}{q}+\frac{\beta-1}{p})}\left[\frac{{\omega^q(x_0-h-2^{k+1}h,x_0-h)}}
{{\omega^q(x_0-h,x_0)}}\right]^{1/q}\\
\lesssim&
\|f\|_{L^{p,\beta}_+(\omega^p)} \sum_{k=1}^\infty 2^{k(\alpha-\frac{1}{q}+\frac{\beta-1}{p}+1)}\\
\lesssim&
\|f\|_{L^{p,\beta}_+(\omega^p)},
\end{align*}
where the fourth inequality follows from H\"{o}lder's inequality, and (b) of the Lemma \ref{Lemma 3.2} is used in the last inequality.

From the above discussion, we conclude that
$$\|\mathcal{T}^+_\alpha{f}\|_{L^{q,\lambda}_+(\omega^q)}
\lesssim\|f\|_{L^{p,\beta}_+(\omega^p)}.$$

Now we turn to prove (ii). The proof is similar to that of Theorem \ref{Theorem 1.12}. We write
\begin{align*}
&\bigg[\frac{\gamma^{q}}{\Phi_{\omega,\lambda,q}^{+}(x_0,h)}\big|
\{x\in(x_0,x_0+h):|\mathcal{T}^+_\alpha f(x)|>\gamma \}\big|\bigg]^{1/q}\\
&\quad\leq \bigg[\frac{\gamma^{q}}{\Phi_{\omega,\lambda,q}^{+}(x_0,h)}\big|
\{x\in(x_0,x_0+h):|\mathcal{T}^+_\alpha f_1(x)|>\frac{\gamma}{2} \}\big|\bigg]^{1/q}\\
&\quad\quad +\bigg[\frac{\gamma^{q}}{\Phi_{\omega,\lambda,q}^{+}(x_0,h)}\big|
\{x\in(x_0,x_0+h):|\mathcal{T}^+_\alpha f_2(x)|>\frac{\gamma}{2} \}\big|\bigg]^{1/q}\\
&\quad=:L_1+L_2.
\end{align*}
For $L_1$, it follows from Remark \ref{rem2.1} and the fact $\mathcal{T}^+_\alpha$ is  bounded from $L^1$ to $L^{q,\infty}$ that
\begin{align*}
L_1 \lesssim& \frac{1}{\Phi_{\omega,\lambda,q}^{+}(x_0,h)^{1/q}}\int _{x_0}^{x_0+2h}|f(y)|dy\\
\lesssim&\|f\|_{L^{1,\beta}_{+}(\omega)}h^{\beta-1-\frac{\lambda-1}{q}}\frac{\omega(x_0-2h,x_0)}
{\left[\omega^q(x_0-h,x_0)\right]^{1/q}}\\
\lesssim&\|f\|_{L^{1,\beta}_{+}(\omega)}\left[\frac{\omega^q(x_0-2h,x_0)}{\omega^q(x_0-h,x_0)}
\right]^{1/q}\\
\lesssim &\|f\|_{L^{1,\beta}_{+}(\omega)}.
\end{align*}
By $(\ref{(1.35)})$, we get
\begin{align*}
|\mathcal{T}^+_\alpha{f_2}(x)|
\lesssim&\sum_{k=1}^{\infty}\frac{1}{{({2^k}h)}^{1-\alpha}}\int_{x_0+2^{k-1}h}
^{{x_0+2^{k}h}}|f(y)|dy\\
\lesssim& \|f\|_{L^{1,\beta}_{+}(\omega)}\sum_{k=1}^{\infty}\frac{\omega(x_0-h-2^{k+1} h,x_0-h)}{{({2^k} h)}^{2-\alpha-\beta}}.
\end{align*}
Thus
\begin{align*}
L_2
\lesssim&\left\{\frac{\|f\|^{q}_{L^{1,\beta}_{+}(\omega)}}{{\Phi_{\omega,\lambda,q}^{+}(x_0,h)}}
\int_{x_0}^{x_0+h}\bigg[\sum_{k=1}^\infty
\frac{\omega(x_0-h-2^{k+1}h,x_0-h)}{({2^k}h)^{2-\alpha-\beta}}\bigg]^q dx\right\}^{1/q}
\end{align*}
\begin{align*}
\lesssim& \|f\|_{L^{1,\beta}_{+}(\omega)}h^{\frac{1}{q}-1}
\sum_{k=1}^\infty2^{k(\alpha+\beta-2)}\frac{\omega(x_0-h-2^{k+1}h,x_0-h)}
{\left[\omega^q(x_0-h,x_0)\right]^{1/q}}\\
\lesssim&\|f\|_{L^{1,\beta}_{+}(\omega)}\sum_{k=1}^\infty 2^{k(\alpha+\beta-1-\frac{1}{q})}\left[\frac{{\omega^q(x_0-h-2^{k+1}h,x_0-h)}}
{{\omega^q(x_0-h,x_0)}}\right]^{1/q}\\
\lesssim&\|f\|_{L^{1,\beta}_{+}(\omega)} \sum_{k=1}^\infty{2^{k(\alpha+\beta-\frac{1}{q})}}\\
\lesssim&\|f\|_{L^{1,\beta}_{+}(\omega)},
\end{align*}
where the last inequality derives from (c) of Lemma \ref{Lemma 3.2}. Consequently,
$$\|\mathcal{T}^+_\alpha{f}\|_{WL^{q,\lambda}_+(\omega^q)}
\lesssim \|f\|_{L^{1,\beta}_+(\omega)}.$$
This completes the proof of Theorem \ref{Theorem 6.12}.
\end{proof}

\section{Locally one-sided weighted Morrey space and its application }

The subject of fractional derivatives and integrals has gained considerable popularity and importance during the past several decades,
due mainly to its demostrated applications in numerous seemingly diverse and widespread fields of science and engineering. It does indeed
provide several potentially useful tools for solving differential and integral equations, and various other problems. In recent years
many articles and books on this subject have been
published (see \cite{KST} and references therein). In this section, we only consider the situation that the fractional index $\alpha\in(0,1)$.
\begin{definition}[\cite{KST}] For $0<\alpha<1$,
the Riemann-Liouville integral of order $\alpha$ is defined as
$$I_{0^+}^{\alpha}f(t)=\frac{1}{\Gamma(\alpha)}\int_{0}^{t}\frac{f(\tau)}{(t-\tau)^{1-\alpha}}
d\tau.$$
The Riemann-Liouville derivative of order $\alpha$ is defined by
$$D_{0^+}^{\alpha}f(t)=\frac{1}{\Gamma(1-\alpha)}\frac{d}{dt}\int_{0}^{t}\frac{f(\tau)}
{(t-\tau)^{1-\alpha}}d\tau.$$
Here $0<t<T$, for some $T>0$. It is clear that $D_{0^+}^{\alpha}f(t)=\frac{d}{dt}I_{0+}^{1-\alpha}f(t)$.
\end{definition}
Many authors studied the representation of functions by fractional integrals in continuous or Lebesgue spaces (see \cite{SKM}). Now, we consider the representation of functions by fractional integrals in local one-sided weighted Morrey space. For convenience, we work on a local one-sided weighted Morrey space $L^{p,\lambda}_+((0,T),\omega^\theta)$, which is defined by
$$\|f\|_{{L^{p,\lambda}_{+}((0,T),\omega^\theta)}}:=\sup_{\substack{0\leq x_0\leq T\\ 0<h\leq T}}\Big(\frac{1}{\Phi_{\omega,\lambda,\theta}^{+}(x_0,h)}\int_{(x_0,x_0+h)\bigcap(0,T)}
|f(y)|^pdy\Big)^{1/p}<\infty$$
for some $T>0$, where ${\Phi_{\omega,\lambda,\theta}^{+}(x_0,h)}$ is given in (\ref{eq2.1}).
%, we show the following relation between the space
%$\widetilde{L}^{p,\lambda}_+((0,T),\omega^\theta)$ and the local one-sided weighted Morrey space $L^{p,\lambda}_+((0,T),\omega^\theta)$.
%\begin{proposition}
%$\widetilde{L}^{p,\lambda}_+((0,T),\omega^\theta)$ is isometric to $L^{p,\lambda}_+((0,T),\omega^\theta)$, $1\leq p\leq \infty$.
%\end{proposition}
%
%\begin{proof}
%Define $A:L^{p,\lambda}_+((0,T),\omega^\theta)\rightarrow \widetilde{L}^{p,\lambda}_+((0,T),\omega^\theta)$ by
%\begin{equation}
%A(f)=\begin{cases}f,\quad \rm{in}~~ (0,T),\\
%0\quad a.e. ~~\rm{on}~~ [T,\infty).
%\end{cases}
%\end{equation}
%Noting that $f\equiv g$ in $L^{p,\lambda}_+(\mathbb{R}^{+},\omega^\theta)$ if $f=g$ a.e. in $\mathbb{R}^+$, we have that $A$ is injective.
%For any $\widetilde{f}\in  \widetilde{L}^{p,\lambda}_+((0,T),\omega^\theta),$ it is easy to check that
%%$$\sup_{x\geq T, r>0}\left(\frac{1}{r^{\lambda}} \int_{B(x,r)\cap(0,T)}|\widetilde{f}(y)|^p dy \right)^{1/p}\leq
%%\sup_{0<x< T, r>0}\left(\frac{1}{r^{\lambda}} \int_{B(x,r)\cap(0,T)}|\widetilde{f}(y)|^p dy \right)^{1/p}$$
%$$\|\widetilde{f}\mid_{(0,T)}\|_{L^{p,\lambda}_+((0,T),\omega^\theta)}=\|\widetilde{f}\|_{L^{p,\lambda}_+(\mathbb{R}^+,\omega^\theta)}.$$
%Thus, $A$ is surjective and
%$$\|Af\|_{L^{p,\lambda}_+(\mathbb{R}^+,\omega^\theta)}=\|f\|_{\widetilde{L}^{p,\lambda}_+((0,T),\omega^\theta)}.$$
%Consequently, we obtain that $A$ is an isometric mapping.
%\end{proof}
The following theorem gives the boundedness of Riemann-Liouville integral operators on local one-sided weighted Morrey spaces.

\begin{theorem}\label{Theorem 4.1}\,
Let $\omega\in A_{(p,q)}^+$, and let $0<\sigma<1$, $1<p,q<\infty$, ${1/p}<\alpha<1$, $0\leq\beta, \mu<1$, $1/p-1/q=\sigma$ and $q+\mu\leq2$. Then
$I_{0^{+}}^{\alpha}$ is bounded from $L^{p,\beta}_+((0,T),\omega^p)$ to $L^{q,\mu}_+((0,T),\omega^q)$.
\end{theorem}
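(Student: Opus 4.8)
The plan is to follow the now-familiar splitting scheme used in the proofs of Theorems \ref{Theorem 1.12} and \ref{Theorem 6.12}, but carried out on the local space $L^{q,\mu}_+((0,T),\omega^q)$ and with the kernel of $I^\alpha_{0^+}$ controlled by that of the one-sided fractional integral. First I would observe that, up to the harmless constant $1/\Gamma(\alpha)$, the operator $I^\alpha_{0^+}$ obeys the one-sided fractional size condition \eqref{(1.35)}: for $t\in(0,T)$ we have $|I^\alpha_{0^+}f(t)|\lesssim\int_0^t|f(\tau)|(t-\tau)^{\alpha-1}\,d\tau\le\int_0^\infty|\bar f(\tau)|(\tau-t)^{\alpha-1}\,d\tau$ after reflecting $\tau\mapsto 2t-\tau$ (or, more simply, just bound $(t-\tau)^{\alpha-1}$ by the same quantity with the Weyl-type kernel applied to $f(\cdot)\chi_{(0,T)}$ reflected about $t$); in any case the point is that $I^\alpha_{0^+}$ falls under the scope of an $\mathcal T^+_\alpha$-type bound. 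Then I would fix $0\le x_0\le T$, $0<h\le T$, set $I=(x_0,x_0+h)$, and split $f=f_1+f_2:=f\chi_{2I\cap(0,T)}+f\chi_{(0,T)\setminus 2I}$, estimating
\[
\Big(\frac{1}{\Phi^+_{\omega,\mu,q}(x_0,h)}\int_{I\cap(0,T)}|I^\alpha_{0^+}f(y)|^q\,dy\Big)^{1/q}\le K_1+K_2
\]
with $K_1,K_2$ the contributions of $f_1,f_2$.

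For $K_1$ I would use the $L^p\to L^q$ boundedness of $I^\alpha_{0^+}$ on $\mathbb R$ (which holds since $\omega\in A^+_{(p,q)}$, $1/p-1/q=\sigma$, via Andersen–Sawyer \cite{AS} together with Lemma \ref{Lemma 7.2}), giving $K_1\lesssim \Phi^+_{\omega,\mu,q}(x_0,h)^{-1/q}\big(\int_{x_0}^{x_0+2h}|f|^p\big)^{1/p}$, then Hölder's inequality to pass to the $\omega^p$-weighted average, and finally the relation $\beta/p=\mu/q$ together with the one-sided doubling estimate $\omega^q(x_0-2h,x_0)\lesssim\omega^q(x_0-h,x_0)$ from Remark \ref{rem2.1} to conclude $K_1\lesssim\|f\|_{L^{p,\beta}_+((0,T),\omega^p)}$. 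The exponent bookkeeping here is exactly the computation done for $K_1$ in Theorem \ref{Theorem 6.12}(i), with $\lambda$ replaced by $\mu$; I expect it to go through verbatim once one checks that the power of $h$ cancels, which uses $1/p-1/q=\sigma$ and $\beta/p=\mu/q$ — so I would state the exponent identity explicitly and refer to Theorem \ref{Theorem 6.12} for the routine manipulation.

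For $K_2$, the main term, I would use \eqref{(1.35)} to write, for $x\in I\cap(0,T)$,
\[
|I^\alpha_{0^+}f_2(x)|\lesssim\sum_{k=1}^\infty\frac{1}{(2^{k-1}h)^{1-\alpha}}\int_{x_0+2^{k-1}h}^{x_0+2^kh}|f(y)|\,dy,
\]
apply Hölder in $y$ over each dyadic ring to get $(2^kh)^{1-1/p}\big(\int_{x_0-h}^{x_0-h+2^{k+1}h}|f|^p\big)^{1/p}$, insert the weight $\omega^p$ and its reciprocal, and recognize the $\omega^p$-average as $\lesssim\|f\|_{L^{p,\beta}_+((0,T),\omega^p)}\,\big((2^{k+1}h)^{\beta-1}\omega^p(x_0-h-2^{k+1}h,x_0-h)\big)^{1/p}$. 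Plugging this into $K_2$, integrating the resulting constant over $I\cap(0,T)$ (which only costs $h^{1/q}$), dividing by $\Phi^+_{\omega,\mu,q}(x_0,h)^{1/q}=h^{(\mu-1)/q}\omega^q(x_0-h,x_0)^{1/q}$, and applying Hölder once more to compare $\omega^p(x_0-h-2^{k+1}h,x_0-h)^{1/p}$ with $\omega^q(x_0-h-2^{k+1}h,x_0-h)^{1/q}$ times a power of $2^kh$, I arrive at a geometric series
\[
K_2\lesssim\|f\|_{L^{p,\beta}_+((0,T),\omega^p)}\sum_{k=1}^\infty 2^{k\,E}\Big[\frac{\omega^q(x_0-h-2^{k+1}h,x_0-h)}{\omega^q(x_0-h,x_0)}\Big]^{1/q},
\]
where $E=\alpha-1/q+(\beta-1)/p+1$ after collecting the powers of $2^k$. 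Finally part (b) of Lemma \ref{Lemma 3.2} (valid since $\omega\in A^+_{(p,q)}$, $1<p<\infty$) bounds the weight ratio by $2^{kq\cdot(1/q)}=2^k$, so the series converges provided $E+1<0$, i.e. $\alpha-1/q+(\beta-1)/p+2<0$; this is where the hypothesis $q+\mu\le2$ must be converted into convergence. The hard part will be precisely this last bookkeeping: I must check that $q+\mu\le 2$, combined with $1/p-1/q=\sigma$, $\beta/p=\mu/q$, $1/p<\alpha<1$ and $0\le\beta,\mu<1$, forces the summing exponent to be negative — using $\mu=\beta q/p$ and $\alpha<1$ one gets $\alpha-1/q+(\beta-1)/p+2 < 1 -1/q + (\beta-1)/p + 2$, and one rewrites $(\beta-1)/p=(\mu-q/p)/q=\mu/q-1/p$ hence the bound becomes $3 - 1/q + \mu/q - 1/p - 1 \le \ldots$, and the inequality $q+\mu\le 2$ is exactly what is needed to push this below $-1$; I would present this elementary but slightly delicate inequality chain carefully, since it is the only place the peculiar constraint $q+\mu\le2$ enters. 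Taking the supremum over $x_0,h$ then yields $\|I^\alpha_{0^+}f\|_{L^{q,\mu}_+((0,T),\omega^q)}\lesssim\|f\|_{L^{p,\beta}_+((0,T),\omega^p)}$, completing the proof.
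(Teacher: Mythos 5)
Your proposal does not match the paper's proof, and it contains gaps that would sink it. First, a directional error: $I^{\alpha}_{0^+}f(t)=\frac{1}{\Gamma(\alpha)}\int_0^t f(\tau)(t-\tau)^{\alpha-1}\,d\tau$ integrates $f$ over $\tau<t$, i.e.\ to the \emph{left} of $t$, so it obeys a $\mathcal{T}^-_\alpha$-type size condition rather than \eqref{(1.35)}. Your $K_2$ expansion over the annuli $(x_0+2^{k-1}h,x_0+2^{k}h)$ sums over regions to the right of $I$, where the kernel of $I^{\alpha}_{0^+}$ vanishes identically; the reflection $\tau\mapsto 2t-\tau$ does not repair this, since it replaces $f$ by a $t$-dependent reflection and swaps $A^+$ for $A^-$. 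Second, and fatally, the geometric series you arrive at requires $\alpha-1/q+(\beta-1)/p+2<0$, which is incompatible with the hypotheses: $\alpha>1/p$ and $\beta\ge0$ give $\alpha+(\beta-1)/p>0$, hence $\alpha-1/q+(\beta-1)/p+2>2-1/q>1$. No manipulation of $q+\mu\le2$ can make that exponent negative — unlike Theorem \ref{Theorem 6.12}, Theorem \ref{Theorem 4.1} assumes neither $\alpha-1/q+(\beta-1)/p+1<0$ nor $\beta/p=\mu/q$ (which you also invoke without justification in $K_1$, together with an $L^p\to L^q$ bound for $I^{\alpha}_{0^+}$ that would require the index relation $1/p-1/q=\alpha$ rather than $1/p-1/q=\sigma$).

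The paper's proof is entirely different and much cruder, exploiting that the space is local. It applies H\"older's inequality directly inside the kernel integral, using $\alpha>1/p$ to get $\int_0^t(t-\tau)^{(\alpha-1)p'}\,d\tau\lesssim t^{(\alpha p-1)p'/p}$; it then bounds $\int_0^t|f(\tau)|^p\,d\tau$ by $T^{\beta-1}\omega^p(-T,0)\,\|f\|^p_{L^{p,\beta}_+((0,T),\omega^p)}$ using the single test interval $(0,T)$, and tracks the powers of $h$ and $T$ through the one fixed integer $k$ with $T/2^{k+1}<h\le T/2^{k}$, invoking Lemma \ref{Lemma 3.2}(b) once to compare $\omega^q(x_0-2^{k+2}h,x_0)$ with $\omega^q(x_0-h,x_0)$ at a cost of $2^{(k+3)q}$. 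The hypothesis $q+\mu\le2$ enters only at the very end, to bound $2^{k(\mu+q-2)}\le1$; the resulting operator norm carries the factor $T^{1-\mu+(\alpha p+\beta-1)q/p}$. If you wish to salvage a dyadic-annuli argument you would have to run it leftward and still accept a $T$-dependent constant, but as written your proposal does not prove the theorem.
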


\begin{proof}
For any $0\leq x_0\leq T$ and $0<h\leq T$, using H\"older's inequality, we have
\begin{align*}
&\frac{1}{\Phi_{\mu,q}^{\omega,+}(x_0,h)}\int_{(x_0,x_0+h)\cap(0,T)}|I_{0^{+}}^{\alpha}f(t)|^qdt\\
&\quad\lesssim\frac{1}{\Phi_{\mu,q}^{\omega,+}(x_0,h)}\int_{(x_0,x_0+h)\cap(0,T)}
\left[\int_{0}^{t}|f(\tau)|^pd\tau\right]^{q/p}\left[\int_{0}^{t}{(t-\tau)^{(\alpha-1) p'}}d\tau\right]^{{q/p'}}dt\\
&\quad\lesssim\frac{1}{\Phi_{\mu,q}^{\omega,+}(x_0,h)}\int_{(x_0,x_0+h)\cap(0,T)}
\left[\int_{0}^{t}|f(\tau)|^pd\tau\right]^{{q/p}}t^{\frac{(\alpha p-1)q}{p}}dt
\\&\quad=:K.
\end{align*}
For $0<h\leq T$, there exists a nonnegative integer $k$ such that $T/2^{k+1}<h\leq T/2^{k}$, then
\begin{align*}
K\lesssim&
\frac{1}{\Phi_{\mu,q}^{\omega,+}(x_0,h)}\int_{(x_0,x_0+h)\cap(0,T)}
\bigg[\frac{T^{\beta-1}\omega^p(-T,0)}{T^{\beta-1}\omega^p(-T,0)} \int_{0}^T|f(\tau)|^pd\tau\bigg]^{{q/p}}t^{\frac{(\alpha p-1)q}{p}}dt
\\
\lesssim& \|f\|_{L^{p,\beta}_+((0,T),\omega^p)}^q\frac{T^{q(\beta-1)/p}\left[\omega^p(-T,0)\right]^{q/p}}{{h^{\mu-1}}
\omega^q(x_0-h,x_0)}\int_{(x_0,x_0+h)\cap(0,T)}t^{\frac{(\alpha p-1)q}{p}}dt
\\
\lesssim& \|f\|_{L^{p,\beta}_+((0,T),\omega^p)}^q\frac{T^{q\beta/p-1}\omega^q(-T,0)}{{h^{\mu-1}}
\omega^q(x_0-h,x_0)}\int_{(x_0,x_0+h)\cap(0,T)}t^{\frac{(\alpha p-1)q}{p}}dt\\
\lesssim&
\|f\|_{L^{p,\beta}_+((0,T),\omega^p)}^q\frac{T^{\beta q/p-1}}{h^{\mu-2}}\frac{\omega^q(x_0-2T,x_0)}{\omega^q(x_0-h,x_0)}
T^{\frac{(\alpha p-1)q}{p}}\\
\lesssim&
\|f\|_{L^{p,\beta}_+((0,T),\omega^p)}^qT^{1-\mu+\frac{(\alpha p+\beta -1)q}{p}}2^{k\mu-2k}\frac{\omega^q(x_0-2^{k+2}h,x_0)}{\omega^q(x_0-h,x_0)}\\
\lesssim&
\|f\|_{L^{p,\beta}_+((0,T),\omega^p)}^qT^{1-\mu+\frac{(\alpha p+\beta-1)q}{p}}2^{k\mu-2k}2^{(k+3)q}
\\
\lesssim&
\|f\|_{L^{p,\beta}_+((0,T),\omega^p)}^qT^{1-\mu+\frac{(\alpha p+\beta-1)q}{p}},
\end{align*}
where the sixth inequality follows from (b) of Lemma \ref{Lemma 3.2} and $q+\mu\leq2$ are used in the last inequality. Consequently,
$$\|I_{0^{+}}^{\alpha}f\|_{L^{q,\mu}_+((0,T),\omega^q)}
\lesssim\|f\|_{L^{p,\beta}_+((0,T),\omega^p)},$$
which completes the proof of Theorem \ref{Theorem 4.1}.
\end{proof}
To study the compactness of operators on $L^{p,\beta}_+((0,T),\omega)$, we first establish a weighted Fr\'{e}chet-Kolmogorov theorem.
%In \cite{CLop}, Clop and Cruz established sufficient conditions for compactness in $L^p(\w)$, $\w\in A_p$.
%In what follows, for $1\leq p<\infty$, $0\leq\beta<1$, $p+\beta\leq2$ and $\omega\in A_p^+$, we establish a sufficient condition for the compactness in $L^{p,\beta}_+((0,T),\omega)$,
%which is also called weighted Fr\'{e}chet-Kolmogorov theorem.
%Recalling that a metric space $X$ is totally bounded if for any $\v>0$ there exists a finite number of open balls of radius $\varepsilon$ whose union is the space $X$. Moreover, a metric space is compact if and only if it is complete and totally bounded.

\begin{theorem}\label{Theorem 4.2}\,
Suppose $1< p<\infty$, $0\leq\beta<1$, $p+\beta\leq2$ and $\omega\in A_p^+$. Let $G$ be a subset of $L^{p,\beta}_+((0,T),\omega)$. Then $G$ is strongly pre-compact set in $L^{p,\beta}_+((0,T),\omega)$ if it satisfies the following conditions:
\begin{enumerate}
\item G is uniformly bounded, i.e. $\sup\limits_{f\in G}\|f\|_{L^{p,\beta}_+((0,T),\omega)}<\infty$;
\item G is uniformly equicontinuous, i.e. $\sup\limits_{f\in G}\|f(\cdot+l)-f(\cdot)\|_{L^{p,\beta}_+((0,T),\omega)}\rightarrow0$, \emph as $l \rightarrow 0$;
\item G is uniformly vanishes at infinity, i.e. $\sup\limits_{f\in G}\|f\chi_{E_{R}}\|_{L^{p,\beta}_+((0,T),\omega)}\rightarrow0$, \emph as $R \rightarrow T$, where $E_{R}:=\{x\in(0,T): x>R\}$.
\end{enumerate}

\end{theorem}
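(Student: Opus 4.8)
The plan is to mimic the classical proof of the Fréchet--Kolmogorov (Riesz) compactness criterion, but with two modifications forced by the weighted Morrey setting: first, a \emph{mollification} step must be compatible with the one-sided weighted Morrey norm, and second, the truncation to a compact set $[0,R]$ is already handled by hypothesis~(3), so the real work is reducing to a compact family in a space where the Arzel\`a--Ascoli theorem applies. Concretely, I would fix $\varepsilon>0$ and produce a finite $\varepsilon$-net for $G$ in $L^{p,\beta}_+((0,T),\omega)$ as follows.

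First I would introduce, for small $\rho>0$, the averaging operator $(S_\rho f)(x):=\frac1\rho\int_0^\rho f(x+s)\,ds$ (a one-sided mean, natural here because the whole paper is one-sided). The key claim is that $S_\rho$ maps $L^{p,\beta}_+((0,T),\omega)$ boundedly into itself \emph{with norm controlled independently of $\rho$}, and moreover $\|S_\rho f - f\|_{L^{p,\beta}_+((0,T),\omega)}\to 0$ uniformly for $f\in G$ as $\rho\to0$. The first fact follows by writing $S_\rho f$ as an average of translates $f(\cdot+s)$, applying Minkowski's integral inequality inside the Morrey norm, and using that translation is (uniformly in $s\le\rho$) bounded on $L^{p,\beta}_+((0,T),\omega)$; the one-sided doubling inequality of Lemma~\ref{Lemma 3.1} together with $p+\beta\le 2$ is what makes the translation bound uniform, exactly as in the weight manipulations in the proofs of Theorems~\ref{Theorem 1.12} and \ref{Theorem 4.1}. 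The second fact is just hypothesis~(2): $\|S_\rho f - f\|\le \frac1\rho\int_0^\rho\|f(\cdot+s)-f(\cdot)\|\,ds\le \sup_{0<s\le\rho}\|f(\cdot+s)-f(\cdot)\|\to0$ uniformly on $G$.

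Next, having fixed $\rho$ so that $\|S_\rho f-f\|<\varepsilon/3$ for all $f\in G$, and fixed $R<T$ (via hypothesis~(3)) so that $\|f\chi_{E_R}\|<\varepsilon/3$ for all $f\in G$, I would examine the family $\{S_\rho f|_{[0,R]} : f\in G\}$. The uniform Morrey bound~(1) upgrades, via H\"older on the fixed interval $(0,\rho)$, to a uniform pointwise bound on $S_\rho f$ on $[0,R]$, and similarly $|(S_\rho f)(x)-(S_\rho f)(y)|$ is controlled by an $L^p$-modulus of continuity of $f$ that is uniformly small by~(2) — so this family is uniformly bounded and equicontinuous on the compact interval $[0,R]$. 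By Arzel\`a--Ascoli it is totally bounded in $C([0,R])$, hence admits a finite $\delta$-net in sup norm; choosing $\delta$ small enough that a sup-norm ball of radius $\delta$ on $[0,R]$ sits inside a Morrey ball of radius $\varepsilon/3$ (here one uses $\|g\chi_{[0,R]}\|_{L^{p,\beta}_+((0,T),\omega)}\lesssim \|g\|_{L^\infty([0,R])}$, which again needs only $p+\beta\le2$ and the doubling of $\omega$ to bound $\Phi^+_{\omega,\beta,1}(x_0,h)^{-1}\int \le C$), the corresponding functions form a finite $\varepsilon$-net for $G$ by the triangle inequality $\|f - g\|\le \|f\chi_{E_R}\| + \|(f-S_\rho f)\chi_{[0,R]}\| + \|(S_\rho f - g)\chi_{[0,R]}\|$.

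I expect the main obstacle to be the uniform boundedness of the averaging operator $S_\rho$ (equivalently, of translations) on $L^{p,\beta}_+((0,T),\omega)$: unlike the unweighted or two-sided case, a translate $f(\cdot+s)$ compares the weight on an interval $(x_0,x_0+h)$ against the weight on the \emph{shifted} reference interval $(x_0-h+s, x_0+s)$ versus $(x_0-h,x_0)$, and controlling the ratio of $\omega$-masses of these two intervals uniformly in $s\le \rho$ and in $h$ is precisely where the one-sided $A_p^+$ structure (Lemma~\ref{Lemma 3.1}, and the geometric estimate Lemma~\ref{Lemma 3.2}(a) when $h$ is comparable to or smaller than $s$) must be invoked carefully, splitting into the regimes $h\ge s$ and $h<s$. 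The constraint $p+\beta\le 2$ enters here to absorb the factor $h^{\beta-1}$ appearing in $\Phi^+_{\omega,\beta,1}$ against the doubling gain, just as $q+\mu\le 2$ was used at the end of the proof of Theorem~\ref{Theorem 4.1}. Once this uniform translation bound is in hand, the remaining steps are the standard Riesz--Kolmogorov argument adapted verbatim.
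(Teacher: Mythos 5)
Your proposal follows essentially the same route as the paper's proof: the paper's operator $M_tf(x)=\frac1t\int_x^{x+t}f(y)\,dy$ is exactly your $S_\rho$, and the paper likewise uses condition (2) to get $\|M_tf-f\|\to0$ uniformly, condition (3) to truncate to $[0,R]$, Arzel\`a--Ascoli on $C([0,R])$, and then Lemma \ref{Lemma 3.2}(a) together with $p+\beta\le2$ to convert the sup-norm net on $[0,R]$ back into a Morrey-norm net (their Case 1), finishing with the same triangle-inequality argument. The only cosmetic difference is that you worry separately about uniform boundedness of $S_\rho$ (which in fact follows immediately from (1) and the uniform convergence $S_\rho f\to f$), whereas the paper compares $f$ to net elements $f_j\in G$ through $M_tf_j$ rather than directly to the continuous approximants.
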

%\begin{lemma}[\cite{HH}]\label{Lemma 10.3}\,\,
%Let $X$ be a metric space. Suppose that for every $\varepsilon>0$ one can find a number $\delta>0$, a metric space $W$ and an mapping $\Phi:X\rightarrow W$ such that $\Phi(X)$ is totally bounded and the implication
%$$d(\Phi(x),\Phi(y))<\delta\Rightarrow d(x,y)<\varepsilon$$
%holds for any $x,y\in X$. Then $X$ is totally bounded.
%\end{lemma}

\begin{proof}
For $t>0$, denote the mean value of $f$ in $G$ by
$$M_tf(x):=\frac{1}{t}\int_0^tf(x+y)dy=\frac{1}{t}\int_x^{x+t}f(y)dy.$$
By H\"{o}lder's inequality,  we get
\begin{align*}
&\bigg[\frac{1}{\Phi_{\beta,1}^{\omega,+}(x_0,h)}\int_{(x_0,x_0+h)\cap(0,T)}|M_tf(x)- f(x)|^pdx\bigg]^{\frac{1}{p}}\\
&\quad\leq\bigg[\frac{1}{\Phi_{\beta,1}^{\omega,+}(x_0,h)}\int_{(x_0,x_0+h)\cap(0,T)}\frac{1}{t}\int_0^t
|f(x+y)- f(x)|^pdydx\bigg]^{\frac{1}{p}}
\end{align*}
\begin{align*}
&\quad\leq\bigg[\frac{1}{t}\int_0^t\frac{1}{\Phi_{\beta,1}^{\omega,+}(x_0,h)}\int_{(x_0,x_0+h)\cap(0,T)}
|f(x+y)- f(x)|^pdxdy\bigg]^{\frac{1}{p}}\\
&\quad\leq\sup_{0<|y|\leq t}\|f(\cdot+y)-f(\cdot)\|_{L^{p,\beta}_+((0,T),\omega)} .
\end{align*}
Hence
\begin{align}\label{eq4.1}
\lim_{t\rightarrow0}\|M_tf-f\|_{L^{p,\beta}_+((0,T),\omega)}\leq\sup_{0<|y|\leq t}\|f(\cdot+y)-f(\cdot)\|_{L^{p,\beta}_+((0,T),\omega)}.
\end{align}
This together with condition $(2)$, $(3)$ in Theorem \ref{Theorem 4.2}, we have
\begin{align}\label{4.2}
\lim_{t\rightarrow0}\|M_tf-f\|_{L^{p,\beta}_+((0,T),\omega)}=0, \quad \text{uniformly in} \:f\in G,
\end{align}
and $\{M_tf: f\in G\}\subset L^{p,\beta}_+((0,T),\omega)$ satisfies $\sup_{f\in G}\|M_tf\|_{L^{p,\beta}_+((0,T),\omega)}<\infty$.
According to condition $(3)$ on $G$, for $\varepsilon>0$, there exists a constant $0<R<T$ such that
\begin{align}\label{4.1}
 \sup\limits_{f\in G}\|f\chi_{E_{R}}\|_{L^{p,\beta}_+((0,T),\omega)}<\frac{\varepsilon}{8}.
\end{align}
Now we prove that for each fixed $t$, which is sufficiently, the set $\{M_t f: f\in G \}$ is a strongly pre-compact set in $C([0,R])$, where $C([0,R])$ denotes the continuous function space on $[0,R]$ with uniform norm. By the Ascoli-Arzel\`{a }theorem, we only need to show that $\{M_t f: f\in G \}$ is bounded and equicontinuous on $C([0,R])$.
For $f\in G$, $0\leq x\leq R$ and $x+t\leq T$, we get
\begin{align*}
|M_tf(x)|\leq&\frac{1}{t}\int_x^{x+t}|f(y)|dy\\
\leq&\Big[\frac{1}{t}\int_x^{x+t}| f(y)|^pdx\Big]^{\frac{1}{p}}\\
\leq&\Big[\frac{{t^{\beta-2}}\omega(x-t,x)}{{t^{\beta-1}}\omega(x-t,x)}\int_{(x,x+t)\bigcap(0,T)}
|f(y)|^pdy\Big]^{\frac{1}{p}}\\
\leq&\Big[{t^{\beta-2}}\omega(-R,R)\Big]^{\frac{1}{p}}\|f\|_{L^{p,\beta}_+((0,T),\omega)}.
\end{align*}

On the other hand, for any $x_1,x_2\in [0,R]$ and $x_2+t\leq T$, we obtain
\begin{align*}
&|M_tf(x_1)-M_tf(x_2)|\\
\;\quad&=\Big|\frac{1}{t}\int_{x_1}^{x_1+t}f(y)dy-\frac{1}{t}
\int_{x_2}^{x_2+t}f(y)dy\Big|\\
\;\quad&=\Big|\frac{1}{t}\int_{x_2}^{x_2+t}f(y+x_2-x_1)dy-\frac{1}{t}
\int_{x_2}^{x_2+t}f(y)dy\Big|\\
\;\quad&\leq\Big]\frac{1}{t}\int_{x_2}^{x_2+t}|f(y+x_2-x_1)-f(y)|^pdy\Big]^{\frac{1}{p}}\\
\;\quad&\leq\Big[\frac{{t^{\beta-2}}\omega(x_2-t,x_2)}{{t^{\beta-1}}\omega(x_2-t,x_2)}
\int_{(x_2,x_2+t)\bigcap(0,T)}|f(y+x_2-x_1)-f(y)|^pdy\Big]^{\frac{1}{p}}\\
\;\quad&\leq\Big[{t^{\beta-2}}\omega(-R,R)\Big]^{\frac{1}{p}}
\|f(\cdot+x_2-x_1)-f(\cdot)\|_{L^{p,\beta}_+((0,T),\omega)}.
\end{align*}
This together with condition $(2)$ implies the equicontinuity of $\{M_t f: f\in G \}$.

Next we turn to prove that for small enough $t$, the set $\{M_t f: f\in G \}$ is also a strongly pre-compact set in ${L^{p,\beta}_+((0,T),\omega)}$. Because the set $\{M_t f: f\in G \}$ is totally bounded set in $C([0,R])$. For the above $\v$, there exist a finite number of $\{f_1,f_2,\ldots,f_m\}\subset G$ such that
\begin{align}\label{eq4.5}
\sup_{0\leq x\leq R}|M_tf(x)- M_tf_j(x)|<\v\Big[\omega(-R,0)\Big]^{\frac{1}{p}}.
\end{align}
We only need to show that for any $f\in G$, $0\leq x_0\leq T$ and $0<h\leq T$, there exist $f_j$$(1\leq j\leq m)$ such that
\begin{align}\label{4.4}
H_1:=\bigg[\frac{1}{\Phi_{\beta,1}^{\omega,+}(x_0,h)}\int_{(x_0,x_0+h)\cap(0,T)}|M_tf(x)- M_tf_j(x)|^pdx\bigg]^{\frac{1}{p}}\lesssim\v.
\end{align}
Case 1: For $0\leq x_0\leq R$ and $x_0+h\leq R$, there exists a nonnegative integer $k$ such that $R/2^{k+1}< h\leq R/2^{k}$. By (\ref{eq4.5}) and (a) of Lemma \ref{Lemma 3.2}, we obtain
\begin{align*}
H_1\leq&\v\bigg[\frac{1}{\Phi_{\beta,1}^{\omega,+}(x_0,h)}\int_{(x_0,x_0+h)\cap[0,R]}
\omega(-R,0)dx\bigg]^{\frac{1}{p}}\\
\leq&\v\Big[\frac{h^{2-\beta}\omega(-R,0)}{\omega(x_0-h,x_0)}\Big]^{\frac{1}{p}}\\
\leq&\v\Big[\frac{2^{k(\beta-2)}R^{2-\beta}\omega(x_0-2^{k+2}h,x_0)}
{\omega(x_0-h,x_0)}\Big]^{\frac{1}{p}}\\
\lesssim&\v2^{\frac{k(p+\beta-2)}{p}}T^{\frac{2-\beta}{p}}
\lesssim\v\nonumber.
\end{align*}
Case 2: For $R<x_0\leq T$, by (\ref{4.2}) and (\ref{4.1}), we have
\begin{align*}
H_1=&\bigg[\frac{1}{\Phi_{\beta,1}^{\omega,+}(x_0,h)}\int_{(x_0,x_0+h)\cap(R,T)}|M_tf(x)- M_tf_j(x)|^pdx\bigg]^{\frac{1}{p}}\\
\leq&\bigg[\frac{1}{\Phi_{\beta,1}^{\omega,+}(x_0,h)}\int_{(x_0,x_0+h)\cap(R,T)}|M_tf(x)- f(x)|^pdx\bigg]^{\frac{1}{p}}\\
&+\bigg[\frac{1}{\Phi_{\beta,1}^{\omega,+}(x_0,h)}\int_{(x_0,x_0+h)\cap(R,T)}|f(x)- f_j(x)|^pdx\bigg]^{\frac{1}{p}}\\
&+\bigg[\frac{1}{\Phi_{\beta,1}^{\omega,+}(x_0,h)}\int_{(x_0,x_0+h)\cap(R,T)}
|M_tf_j(x)- f_j(x)|^pdx\bigg]^{\frac{1}{p}}\\
\leq&\|M_tf-f\|_{L^{p,\beta}_+((0,T),\omega)}+\|M_tf_j-f_j\|_{L^{p,\beta}_+
((0,T),\omega)}\\
&+\|f_j\chi_{E_{R}}\|_{L^{p,\beta}_+((0,T),\omega)}
+\|f\chi_{E_{R}}\|_{L^{p,\beta}_+((0,T),\omega)}\\
\lesssim&\v.
\end{align*}
Case 3: For $0\leq x_0\leq R$ and $x_0+h>R$, the proof is quite similar to that of Case 1 and Case 2. In fact,
\begin{align*}
H_1\leq&\bigg[\frac{1}{\Phi_{\beta,1}^{\omega,+}(x_0,h)}\int_{(x_0,x_0+h)\cap[0,R]}|M_tf(x)- M_tf_j(x)|^pdx\bigg]^{\frac{1}{p}}
\end{align*}
\begin{align*}
&+
\bigg[\frac{1}{\Phi_{\beta,1}^{\omega,+}(x_0,h)}\int_{(x_0,x_0+h)\cap(R,T)}|M_tf(x)- M_tf_j(x)|^pdx\bigg]^{\frac{1}{p}}\\
=:&H_{11}+H_{12}.
\end{align*}
If $h\leq R$, then the conclusion (\ref{4.4}) in this case can be deduced from Case 1 and Case 2. If $R< h\leq T$, we get $H_{12}\lesssim \v$ by the same discussion in Case 2. For $H_1$, we choose a nonnegative integer $k$ such that $T/2^{k+1}< h\leq T/2^{k}$. Proceeding the proof as we do in Case (1), we have $H_{11}\lesssim \v$.

Finally, we show that the set $G$ is a relative compact set in ${L^{p,\beta}_+((0,T),\omega)}$. Take any sequence $\{f_j\}_{j=1}^\infty\subset G$. Since the relative compactness of $\{M_t f: f\in G \}\subset {L^{p,\beta}_+((0,T),\omega)}$, there exists a subsequence $\{M_tf_{j_k}\}_{k=1}^\infty$ of $\{M_tf_j: f_j\}$ that is convergent in ${L^{p,\beta}_+((0,T),\omega)}$. For any $\v>0$ there exists $N\in\mathbb{N}$ such that for any $k>N$ and $m\in \mathbb{N}$, $\|M_tf_{j_k}-M_tf_{j+m}\|_{L^{p,\beta}_+((0,T),\omega)}<\v$. For any $0<h\leq T$ and $x_0\geq0$, using (\ref{4.2}), we get
\begin{align*}
&\bigg[\frac{1}{\Phi_{\beta,1}^{\omega,+}(x_0,h)}\int_{(x_0,x_0+h)\cap(0,T)}|f_j(x)- f_{j+m}(x)|^pdx\bigg]^{\frac{1}{p}}\\
&\quad\leq\bigg[\frac{1}{\Phi_{\beta,1}^{\omega,+}(x_0,h)}\int_{(x_0,x_0+h)\cap(0,T)}
|f_j(x)- M_tf_j(x)|^pdx\bigg]^{\frac{1}{p}}\\
&\quad\quad+\bigg[\frac{1}{\Phi_{\beta,1}^{\omega,+}(x_0,h)}\int_{(x_0,x_0+h)\cap(0,T)}
|M_tf_{j+m}(x)- M_tf_j(x)|^pdx\bigg]^{\frac{1}{p}}\\
&\quad\quad+\bigg[\frac{1}{\Phi_{\beta,1}^{\omega,+}(x_0,h)}\int_{(x_0,x_0+h)\cap(0,T)}
|M_tf_{j+m}(x)- f_{j+m}(x)|^pdx\bigg]^{\frac{1}{p}}\\
&\quad\leq\|M_tf_{j}-f_{j}\|_{L^{p,\beta}_+((0,T),\omega)}+\|M_tf_{j}-M_tf_{j+m}\|
_{L^{p,\beta}_+((0,T),\omega)}\\
&\quad\quad+\|M_tf_{j+m}-f_{j+m}\|_{L^{p,\beta}_+((0,T),\omega)}\\
&\quad\lesssim \v.
\end{align*}
This implies that the subsequence $\{f_j\}_{j=1}^\infty$ converges in ${L^{p,\beta}_+((0,T),\omega)}$. By noting that ${L^{p,\beta}_+((0,T),\omega)}$ is a Banach space. Consequently, the set $G$ is a relative compact set in ${L^{p,\beta}_+((0,T),\omega)}$.
\end{proof}
\begin{proposition}\label{prop1}
If $0<\sigma<1$, $1<p, q<\infty$, $\frac{1}{2}(1+\frac{1}{p})<\alpha<1$, $\omega \in A^+_{(p,q)}$, $0\leq\beta,\mu<1$, $1/p-1/q=\sigma$ and $q+\mu\leq2$, then
$$Tu(t)=\frac{\chi_{(0,T)}(t)}{\Gamma(\alpha)}\int_{0}^{t}\frac{u(\tau)}{(t-\tau)^{1-\alpha}}d\tau,$$
is a compact operator from ${L}^{p,\beta}_+((0,T),\omega^p)$ to ${L}^{q,\mu}_+((0,T),\omega^q)$.
\end{proposition}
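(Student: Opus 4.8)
The plan is to factor the argument into three pieces: boundedness, equicontinuity in the shift, and vanishing at infinity, so that the compactness follows by applying Theorem \ref{Theorem 4.2} to the image set $G := \{Tu : \|u\|_{L^{p,\beta}_+((0,T),\omega^p)} \le 1\}$. Boundedness of $G$ in $L^{q,\mu}_+((0,T),\omega^q)$ is exactly Theorem \ref{Theorem 4.1}, whose hypotheses are contained in those of the proposition (note $\tfrac12(1+\tfrac1p)<\alpha<1$ forces $\tfrac1p<\alpha<1$). So the real work is verifying conditions (2) and (3) of Theorem \ref{Theorem 4.2} for $G$, uniformly over the unit ball of $L^{p,\beta}_+((0,T),\omega^p)$.

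For the equicontinuity condition (2), I would estimate $Tu(\cdot+l) - Tu(\cdot)$ for small $l>0$. Writing out the difference of the two Riemann--Liouville integrals, one splits it in the usual way into a term coming from the difference of the kernels $(t+l-\tau)^{\alpha-1}-(t-\tau)^{\alpha-1}$ on the common interval of integration and a term coming from the extra strip of integration of length $l$ near the singularity. Both pieces are controlled by Hardy--Littlewood--Sobolev-type bounds together with the one-sided weight estimates already assembled in Lemma \ref{Lemma 3.1}, Lemma \ref{Lemma 3.2} and Remark \ref{rem2.1}; the stronger lower bound $\alpha > \tfrac12(1+\tfrac1p)$ is precisely what gives a positive power of $l$ in the resulting estimate (so that the bound tends to $0$ as $l\to 0$), which is why the proposition assumes more than Theorem \ref{Theorem 4.1} does. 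One should also handle the characteristic function $\chi_{(0,T)}$ carefully, but since the translate only affects a set of measure $O(l)$ near the endpoint $T$ and $u$ lies in a Morrey space, this contributes another term with a positive power of $l$.

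For the vanishing-at-infinity condition (3), I would show $\|(Tu)\chi_{E_R}\|_{L^{q,\mu}_+((0,T),\omega^q)} \to 0$ as $R\to T$ uniformly in $u$ on the unit ball. Rerunning the chain of inequalities in the proof of Theorem \ref{Theorem 4.1} but restricting the outer integral to $(x_0,x_0+h)\cap(R,T)$ produces an extra factor $(T-R)^{\gamma}$ with $\gamma>0$ coming from the shrinking domain, together with the same weight ratios bounded by Lemma \ref{Lemma 3.2}(b), so the bound is $\lesssim (T-R)^{\gamma}$, which goes to $0$. Once conditions (1)--(3) are checked, Theorem \ref{Theorem 4.2} yields that $G$ is strongly pre-compact in $L^{q,\mu}_+((0,T),\omega^q)$, i.e.\ $T$ is compact. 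The main obstacle I anticipate is the equicontinuity estimate: extracting a clean positive power of $l$ from the kernel-difference term near the singularity $\tau = t$, uniformly in $x_0$ and $h$ and against the one-sided weight, is delicate, and this is exactly where the hypothesis $\alpha > \tfrac12(1+\tfrac1p)$ has to be used — a routine Morrey-space bound that suffices for boundedness is not enough here.
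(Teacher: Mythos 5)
Your proposal is correct and follows essentially the same route as the paper: apply the weighted Fr\'{e}chet--Kolmogorov theorem (Theorem \ref{Theorem 4.2}) to the image of a bounded set, get condition (1) from the argument of Theorem \ref{Theorem 4.1}, and verify condition (2) by splitting $Tu(\cdot+l)-Tu(\cdot)$ into a kernel-difference term on the common interval and an extra-strip term, with the hypothesis $\alpha>\tfrac12(1+\tfrac1p)$ used exactly where you say, namely to make the H\"older integral $\int_0^x (x+l-\tau)^{-(1-\alpha)p'}(x-\tau)^{-(1-\alpha)p'}\,d\tau$ finite. The only divergence is condition (3), which the paper dispatches as identically zero (taking $R>T$) rather than by your $(T-R)^{\gamma}$ estimate; your version is, if anything, the more careful one.
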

\begin{proof}
Suppose that $\mathbb{K}$ is an arbitrary bounded set in ${L}^{p,\beta}_+((0,T),\omega^p)$, it suffices to prove that $T(\mathbb{K})$ is strongly pre-compact set in ${L}^{q,\mu}_+((0,T),\omega^q)$. By Theorem \ref{Theorem 4.2}, we only need to
prove that $(1)-(3)$ hold uniformly in $T(\mathbb{K})$.

$(1)$.\,\, Without loss of generality, for any $u\in \mathbb{K}$, we assume $\|u\|_{{L}^{p,\beta}_+((0,T),\omega^p)}\leq M$ for some $M>0$.
%For any $x_0\in \mathbb{R}^{+}$
%and $h\in \mathbb{R}^{+}$,
Note that $Tu(t)=\chi_{(0,T)}(t)I_{0^{+}}^{\alpha}f(t)$. Then
% as in \eqref{2.2},
%By H\"older's inequality, we have
\begin{align*}\begin{split}
\frac{1}{\Phi_{\mu,q}^{\omega,+}(x_0,h)}\int_{(x_0,x_0+h)\cap(0,T)}|Tu(t)|^qdt
=
\frac{1}{\Phi_{\mu,q}^{\omega,+}(x_0,h)}\int_{(x_0,x_0+h)\cap(0,T)}\left|I_{0^{+}}^{\alpha}f(t)
\right|^qdt.
\end{split}
\end{align*}
By the same discussion as in Theorem \ref{Theorem 4.1}, we get
$$\|Tu\|_{L^{q,\mu}_+((0,T),\omega^q)}\lesssim M.$$
$(2)$.\,\, For any $x\geq T,~y>0$, it is easy to see that
\begin{equation*}
|Tu(x+y)-Tu(x)|=0.\label{0}
\end{equation*}
For $x\in (0,T)$ and $y>0$ small enough such that $x+y\in (0,T)$. Then
\begin{align*}
|Tu(x+y)-Tu(x)|
=&\frac{1}{\Gamma(\alpha)}\Big|\int_{0}^{x+y}\frac{u(\tau)}{(x+y-\tau)^{1-\alpha}}d\tau-
\int_{0}^{x}\frac{u(\tau)}{(x-\tau)^{1-\alpha}}d\tau\Big|
\\
\leq&\frac{1}{\Gamma(\alpha)}\Big|\int_{0}^{x}\frac{u(\tau)}{(x+y-\tau)^{1-\alpha}}
-\frac{u(\tau)}{(x-\tau)^{1-\alpha}}d\tau
\Big|
\\
&+\frac{1}{\Gamma(\alpha)}\Big|
\int_{x}^{x+y}\frac{u(\tau)}{(x+y-\tau)^{1-\alpha}}d\tau
\Big|\\
=&:F_1(x)+F_2(x).
\end{align*}
Recall that $|x^{\lambda}-y^{\lambda}|\leq |x-y|^{\lambda}$, $x,y\geq 0$, $0<\lambda<1$.
The fact that $\frac{1}{2}(1+\frac{1}{p})<\alpha<1$ and  H\"older's inequality imply that
\begin{equation*}\begin{split}
F_1(x)&=\frac{1}{\Gamma(\alpha)}\Big|\int_{0}^{x}u(\tau)\frac{(x-\tau)^{1-\alpha}-(x+y-\tau)^{1-\alpha}}
{(x+y-\tau)^{1-\alpha}(x-\tau)^{1-\alpha}}d\tau
\Big|\\
&\lesssim\int_{0}^{x}\frac{|u(\tau)|y^{1-\alpha}}
{(x+y-\tau)^{1-\alpha}(x-\tau)^{1-\alpha}}d\tau\\
&\lesssim y^{1-\alpha}\left[\int_{0}^{x}|u(\tau)|^pd\tau\right]^{1/p}\left[\int_{0}^{x}\frac{1}
{(x+y-\tau)^{(1-\alpha)p'}(x-\tau)^{(1-\alpha)p'}}d\tau\right]^{1/p'}\\
&\lesssim y^{1-\alpha}\left[\int_{0}^{x}|u(\tau)|^pd\tau\right]^{1/p}
x^{\frac{2\alpha p-1-p}{p}}.
\end{split}\end{equation*}
For $F_2(x)$, by H\"older's inequality again, we have
\begin{equation*}\begin{split}
F_2(x)&\leq\frac{1}{\Gamma(\alpha)}\left[
\int_{x}^{x+y}|u(\tau)|^pd\tau\right]^{1/p}\left[
\int_{x}^{x+y}\frac{1}{(x+y-\tau)^{(1-\alpha) p'}}d\tau\right]^{1/p'}\\
%&\lesssim \left[
%\int_{x}^{x+y}|u(\tau)|^pd\tau\right]^{1/p}
%\left[
%\int_{0}^{y}\frac{1}{\tau_1^{(1-\alpha) p'}}d\tau_1\right]^{1-\frac{1}{p}}\\
&\lesssim \left[
\int_{x}^{x+y}|u(\tau)|^pd\tau\right]^{1/p}
y^{\frac{\alpha p-1}{p}}.\label{2+}
\end{split}\end{equation*}
If $x_0\geq T$ and any $h>0$, we get
$$\frac{1}{\Phi_{\mu,q}^{\omega,+}(x_0,h)}\int_{(x_0,x_0+h)\cap(0,T)}|Tu(x+y)-Tu(x)|^qdx
=0.$$
If $0\leq x_0<T$, combining with estimates for $F_1(x)$ and $F_2(x)$ lead to
\begin{align*}
&\frac{1}{\Phi_{\mu,q}^{\omega,+}(x_0,h)}\int_{(x_0,x_0+h)\cap(0,T)}
|Tu(x+y)-Tu(x)|^qdx\\
&\quad\leq
\frac{1}{\Phi_{\mu,q}^{\omega,+}(x_0,h)}\int_{(x_0,x_0+h)\cap(0,T)}|F_1(x)+F_2(x)|^qdx
\\
&\quad\lesssim {\frac{y^{(1-\alpha)q}}{\Phi_{\mu,q}^{\omega,+}(x_0,h)}}\int_{(x_0,x_0+h)\cap(0,T)}
\left[\int_{0}^{x}|u(\tau)|^pd\tau\right]^{q/p}
x^{\frac{(2\alpha p-1-p)q}{p}}dx\\
&\quad\quad+\frac{y^{{(\alpha p-1)q/p}}}{{\Phi_{\mu,q}^{\omega,+}(x_0,h)}}\int_{(x_0,x_0+h)\cap(0,T)}\left[
\int_{x}^{x+y}|u(\tau)|^pd\tau
\right]^{q/p}dx\\
&\quad=:F_{11}+F_{21}.
\end{align*}
%For $J_1$, we have
%\begin{equation*}\begin{split}
%&J_1\leq C \left(\frac{1}{{h^{\mu-1}}\omega^q(x_0-h,x_0)}\int_{(x_0,x_0+h)\cap(0,T)}y^{(1-\alpha)q}\left(\int_{0}^{x}|u(\tau)|^pd\tau\right)^{q/p}
%x^{\frac{(2\alpha p-1-p)q}{p}}dx\right)^{1/q}.
%\end{split}\end{equation*}
By the similar discussion as in Theorem \ref{Theorem 4.1}, we have
$$F_{11}\lesssim y^{(1-\alpha)q}\|u\|_{{L}^{p,\beta}_+((0,T),\omega^p)}^q
\lesssim y^{(1-\alpha)q} M^q.$$
In view of the estimate for $K$ in Theorem \ref{Theorem 4.1}, we obtain
\begin{equation*}\begin{split}
F_{21}
\lesssim&\frac{y^{{(\alpha p-1)q/p}}}{{{\Phi_{\mu,q}^{\omega,+}(x_0,h)}}}\int_{(x_0,x_0+h)\cap(0,T)}\Big[
\int_{0}^{T}|u(\tau)|^pd\tau
\Big]^{q/p}dx
\lesssim y^{{(\alpha p-1)q/p}}M^q.
\end{split}\end{equation*}
Thus
$$\|Tu(\cdot+y)-Tu(\cdot)\|_{L^{q,\mu}_+((0,T),\omega^q)}\lesssim M\left(y^{1-\alpha}+y^{\frac{\alpha p-1}{p}}\right).$$
Recalling from $(1+1/p)/2<\alpha<1$, we get $1-\alpha>0$ and $(\alpha p-1)/p>0$. Then
$$\lim_{y\rightarrow 0}\|Tu(\cdot+y)-Tu(\cdot)\|_{L^{q,\mu}_+((0,T),\omega^q)}
=0$$
uniformly in $Tu\in T(\mathbb{K})$.

$(3)$. It is easy to check that for any $u\in \mathbb{K}$ and $R>T$
\begin{align*}
\|Tu\chi_{E_{R}}\|_{L^{q,\mu}_+((0,T),\omega^q)}^q=&
\sup_{\substack{0\leq x_0\leq T\\ 0<h\leq T\\}}\frac{1}{\Phi_{\mu,q}^{\omega,+}(x_0,h)}\int_{(x_0,x_0+h)\bigcap(0,T)}
|Tu\chi_{E_{R}}(t)|^qdt
\\=&0.
\end{align*}
Therefore,
$$\lim_{R\rightarrow T}\|(Tu)\chi_{E_{R}}\|_{L^{q,\mu}_+((0,T),\omega^q)}=0$$
uniformly in $Tu\in T(\mathbb{K})$.

Consequently, by Theorem \ref{Theorem 4.2}, we conclude that $T(\mathbb{K})$ is a  relative compact set in ${L^{q,\mu}_+((0,T),\omega^q)}$. This completes the proof of Proposition \ref{prop1}.
\end{proof}

\section{Representation of functions by fractional integrals in locally one-sided weighted Morrey spaces }
The integral equation
\begin{equation}
I_{0^{+}}^{\alpha}\varphi(x)=f(x),\quad x>0, \label{ae}
\end{equation}
where $0<\alpha<1$, is called {\it Abel's equation} \cite{SKM}.
Denote by
\begin{equation}f_{1-\alpha}(x)=I_{0^{+}}^{1-\alpha}f(x)=\frac{1}{\Gamma(1-\alpha)}\int_{0}^{x}\frac{f(t)dt}{(x-t)^{\alpha}}.
\label{f1-}
\end{equation}

The solvable condition of \eqref{ae} is usually associated with the
 absolutely continuous functions in continuous or Lebesgue spaces.
\begin{definition}
A function $f(x)$ is called absolutely continuous on an interval $\Omega$, if for every $\v>0$ there exists $\delta>0$
such that for any finite set of pairwise disjoint intervals $[a_k, b_k]\subset \Omega$, $k=1,2,\cdots,n,$ such that
$\sum_{k=1}^{n}(b_k-a_k)<\delta$, the inequality $\sum_{k=1}^{n}|f(b_k)-f(a_k)|<\varepsilon$ holds. The space of these functions is
denoted by $AC(\Omega)$.
\end{definition}

It is well known \cite{SKM} that the space $AC(\Omega)$ coincides with the space of primitives of Lebesgue summable functions:
\begin{equation}
f(x)\in AC([a,b])\Leftrightarrow f(x)=c+\int_{a}^{x}\varphi(t)dt,\quad \int_{a}^{b}|\varphi(t)|dt<\infty.
\label{ac}
\end{equation}

Samko et al. \cite{SKM} introduced the following representation of functions by fractional integrals in Lebesgue spaces.

\begin{theorem}
Let $0<\alpha<1$. Abel equation \eqref{ae} is solvable in ${L}^{1}(0,T)$ if and only if
\begin{equation}
f_{1-\alpha}(x)\in AC([0,T]), \quad f_{1-\alpha}(0)=0.\label{Ae}
\end{equation}
These conditions being satisfied the equation has a unique solution given by
\begin{equation}
\varphi(x)=\frac{1}{\Gamma(1-\alpha)}\frac{d}{dx}\int_{0}^{x}\frac{f(t)dt}{(x-t)^{\alpha}}=D_{0^{+}}^{\alpha}f(x).\label{so}
\end{equation}
\end{theorem}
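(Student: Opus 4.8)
The plan is to reduce the whole equivalence to two standard facts about the Riemann--Liouville integral on a finite interval: the composition (semigroup) law $I_{0^{+}}^{\mu}I_{0^{+}}^{\nu}=I_{0^{+}}^{\mu+\nu}$ for $\mu,\nu>0$ acting on $L^{1}(0,T)$, and the characterization \eqref{ac} of $AC([0,T])$ as the class of indefinite integrals of $L^{1}$ functions. Throughout I take $f\in L^{1}(0,T)$, so that $f_{1-\alpha}=I_{0^{+}}^{1-\alpha}f$ is well defined; recall also that $I_{0^{+}}^{\gamma}$ maps $L^{1}(0,T)$ boundedly into itself for every $\gamma>0$, since its kernel $x^{\gamma-1}/\Gamma(\gamma)$ is integrable on $(0,T)$. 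First I would verify the composition law for $\varphi\in L^{1}(0,T)$ by writing out the iterated integral, using Tonelli's theorem to interchange the order of integration, and evaluating the inner integral $\int_{\tau}^{x}(x-s)^{\mu-1}(s-\tau)^{\nu-1}\,ds$ via the substitution $s=\tau+u(x-\tau)$, which produces the Beta value $B(\mu,\nu)=\Gamma(\mu)\Gamma(\nu)/\Gamma(\mu+\nu)$. From this law I extract the key \emph{injectivity} statement: if $g\in L^{1}(0,T)$ and $I_{0^{+}}^{\gamma}g=0$ with $0<\gamma<1$, then applying $I_{0^{+}}^{1-\gamma}$ gives $I_{0^{+}}^{1}g(x)=\int_{0}^{x}g(t)\,dt=0$ for all $x$, whence $g=0$ a.e.

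\emph{Necessity.} Suppose \eqref{ae} has a solution $\varphi\in L^{1}(0,T)$, i.e. $I_{0^{+}}^{\alpha}\varphi=f$. Applying $I_{0^{+}}^{1-\alpha}$ and invoking the composition law with $\mu=1-\alpha,\ \nu=\alpha$ gives
\begin{equation*}
f_{1-\alpha}(x)=I_{0^{+}}^{1-\alpha}I_{0^{+}}^{\alpha}\varphi(x)=I_{0^{+}}^{1}\varphi(x)=\int_{0}^{x}\varphi(t)\,dt .
\end{equation*}
Since $\varphi\in L^{1}(0,T)$, the characterization \eqref{ac} shows $f_{1-\alpha}\in AC([0,T])$, and the integral representation forces $f_{1-\alpha}(0)=0$; this is precisely \eqref{Ae}.

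\emph{Sufficiency and uniqueness.} Conversely, assume \eqref{Ae}. By \eqref{ac} there is $\psi\in L^{1}(0,T)$ with $f_{1-\alpha}(x)=f_{1-\alpha}(0)+\int_{0}^{x}\psi(t)\,dt=\int_{0}^{x}\psi(t)\,dt$, and differentiating identifies $\psi=\tfrac{d}{dx}I_{0^{+}}^{1-\alpha}f=D_{0^{+}}^{\alpha}f$ a.e. I claim $\varphi:=\psi$ solves \eqref{ae}. Indeed $I_{0^{+}}^{1}\psi=\int_{0}^{x}\psi=f_{1-\alpha}=I_{0^{+}}^{1-\alpha}f$, while the composition law also gives $I_{0^{+}}^{1}\psi=I_{0^{+}}^{1-\alpha}I_{0^{+}}^{\alpha}\psi$; subtracting,
\begin{equation*}
I_{0^{+}}^{1-\alpha}\bigl(I_{0^{+}}^{\alpha}\psi-f\bigr)=0 .
\end{equation*}
As $I_{0^{+}}^{\alpha}\psi-f\in L^{1}(0,T)$, the injectivity of $I_{0^{+}}^{1-\alpha}$ forces $I_{0^{+}}^{\alpha}\psi=f$ a.e., so $\varphi=D_{0^{+}}^{\alpha}f$ is a solution, which is \eqref{so}. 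Uniqueness follows from the same injectivity: if $\varphi_{1},\varphi_{2}\in L^{1}(0,T)$ both solved \eqref{ae}, then $I_{0^{+}}^{\alpha}(\varphi_{1}-\varphi_{2})=0$ would give $\varphi_{1}=\varphi_{2}$ a.e.

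The only genuinely technical point is the composition law, where the Fubini--Tonelli interchange must be justified for the singular kernels; this is harmless since after passing to absolute values the integrand is nonnegative and $\varphi\in L^{1}$, and the inner Beta integral is computed explicitly. Everything else is bookkeeping built on that identity together with \eqref{ac}, and in particular the passage from the abstract solvability to the explicit formula $\varphi=D_{0^{+}}^{\alpha}f$ is forced, not guessed, once injectivity is in hand.
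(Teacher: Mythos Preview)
The paper does not actually prove this $L^{1}$ statement; it is quoted from \cite{SKM} without argument. The nearest thing to a ``paper's own proof'' is the proof of Theorem~\ref{Theorem 5.2}, the weighted-Morrey analog, which the paper says follows the same pattern as \cite{SKM}.

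Comparing your argument to that proof: both rely on the same two ingredients, namely the composition identity $I_{0^{+}}^{1-\alpha}I_{0^{+}}^{\alpha}\varphi=I_{0^{+}}^{1}\varphi=\int_{0}^{x}\varphi$ (obtained via Fubini and the Beta integral) and the characterization \eqref{ac} of $AC$. The necessity direction is identical in structure. For sufficiency the paper takes a slightly more roundabout route: it defines the candidate $\varphi=f'_{1-\alpha}$, sets $g:=I_{0^{+}}^{\alpha}\varphi$, re-applies the necessity argument to this auxiliary equation to get $g'_{1-\alpha}=f'_{1-\alpha}$, integrates to $f_{1-\alpha}-g_{1-\alpha}=c$, uses the initial values to force $c=0$, and finally invokes uniqueness of the Abel equation to conclude $f=g$. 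You instead isolate the injectivity of $I_{0^{+}}^{\gamma}$ on $L^{1}$ up front and apply it directly to $I_{0^{+}}^{1-\alpha}\bigl(I_{0^{+}}^{\alpha}\psi-f\bigr)=0$. Your version is a little cleaner and makes the logical dependence on injectivity explicit rather than implicit, but the mathematical content is the same; there is no genuine gap.
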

For $\w \in A_1^+$, $0<\lambda<1,$ the corresponding characterization of the solution to \eqref{ae} in locally one-sided weighted Morrey
space ${L}^{1,\lambda}_+((0,T),\omega)$  also holds.

\begin{theorem}\label{Theorem 5.2}
Let $0<\lambda<1$ and $\w \in A_1^+$. Abel equation \eqref{ae} is solvable in ${L}^{1,\lambda}_+((0,T),\omega)$ if and only if
\eqref{Ae} holds and $$ f'_{1-\alpha}(x)\in {L}^{1,\lambda}_+((0,T),\omega),$$
where $f'_{1-\alpha}(x)=\frac{d}{dx}f_{1-\alpha}(x)$. These conditions being satisfied the equation has a unique solution given by
\eqref{so}.
\end{theorem}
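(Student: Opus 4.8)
The strategy is to reduce everything to the classical theorem of Samko et al.\ (the version stated just above) by observing that ${L}^{1,\lambda}_+((0,T),\omega)$ embeds into ${L}^1(0,T)$, and then to upgrade the conclusion from ``$\varphi\in L^1$'' to ``$\varphi\in {L}^{1,\lambda}_+((0,T),\omega)$''. First I would prove the embedding ${L}^{1,\lambda}_+((0,T),\omega)\hookrightarrow {L}^1(0,T)$: taking $x_0=0$ and $h=T$ in the definition of the norm, one has
\begin{equation*}
\int_0^T|f(y)|\,dy\le \Phi_{\omega,\lambda,1}^{+}(0,T)\,\|f\|_{{L}^{1,\lambda}_+((0,T),\omega)}
= T^{\lambda-1}\omega(-T,0)\,\|f\|_{{L}^{1,\lambda}_+((0,T),\omega)},
\end{equation*}
which is finite because $\omega\in A_1^+\subset L^1_{\mathrm{loc}}$. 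Hence any $\varphi$ solving \eqref{ae} in ${L}^{1,\lambda}_+((0,T),\omega)$ automatically solves it in $L^1(0,T)$, and conversely the candidate solution \eqref{so} produced by the classical theorem is at least a legitimate $L^1$ function to test against.

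\textbf{Necessity.} Suppose \eqref{ae} is solvable in ${L}^{1,\lambda}_+((0,T),\omega)$ with solution $\varphi$. By the embedding above, $\varphi\in L^1(0,T)$, so the classical theorem applies and gives \eqref{Ae} together with the identity $\varphi(x)=D_{0^+}^{\alpha}f(x)=\frac{d}{dx}f_{1-\alpha}(x)=f'_{1-\alpha}(x)$ a.e. Since $\varphi$ was assumed to lie in ${L}^{1,\lambda}_+((0,T),\omega)$, this immediately yields $f'_{1-\alpha}\in {L}^{1,\lambda}_+((0,T),\omega)$. Thus the ``only if'' direction is essentially bookkeeping once the embedding is in place.

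\textbf{Sufficiency.} Assume \eqref{Ae} holds and $f'_{1-\alpha}\in {L}^{1,\lambda}_+((0,T),\omega)$. In particular $f'_{1-\alpha}\in L^1(0,T)$ by the embedding, so by the characterization \eqref{ac} of $AC$ and the hypothesis $f_{1-\alpha}(0)=0$ we can write $f_{1-\alpha}(x)=\int_0^x f'_{1-\alpha}(t)\,dt$, i.e.\ $f_{1-\alpha}\in AC([0,T])$ with $f_{1-\alpha}(0)=0$. The classical theorem of Samko et al.\ then guarantees that \eqref{ae} has the unique (in $L^1$) solution $\varphi=D_{0^+}^{\alpha}f=f'_{1-\alpha}$. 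It remains only to check that this $\varphi$ actually belongs to ${L}^{1,\lambda}_+((0,T),\omega)$ — but $\varphi=f'_{1-\alpha}$, which is in ${L}^{1,\lambda}_+((0,T),\omega)$ by assumption. Finally, uniqueness within ${L}^{1,\lambda}_+((0,T),\omega)$ follows from uniqueness within the larger space $L^1(0,T)$ and the embedding: any two solutions in the Morrey space are solutions in $L^1$, hence equal a.e.

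\textbf{Main obstacle.} The only non-formal point is the embedding ${L}^{1,\lambda}_+((0,T),\omega)\hookrightarrow L^1(0,T)$ and, dually, ensuring the classical $L^1$-solution lands back in the Morrey space; both hinge on the fact that $\omega\in A_1^+$ is locally integrable (so $\omega(-T,0)<\infty$) and that $\lambda<1$ keeps $T^{\lambda-1}$ finite. Once these are observed, the proof is a clean sandwiching argument between the classical theorem and the Morrey-norm bounds, with no need to re-derive the differentiation-under-the-integral machinery that underlies \eqref{so}. I would also remark that, unlike Theorems \ref{Theorem 4.1}--\ref{Theorem 4.2}, the one-sided doubling estimates of Lemma \ref{Lemma 3.2} are not needed here — the endpoint case $h=T$, $x_0=0$ does all the work.
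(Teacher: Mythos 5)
Your proposal is correct and follows essentially the same route as the paper: the paper's own proof also hinges on the embedding ${L}^{1,\lambda}_+((0,T),\omega)\subset L^1(0,T)$ obtained by taking $x_0=0$ and $h=T$ in the Morrey norm, and then runs the classical Abel-equation argument of Samko et al. The only difference is presentational — the paper re-derives the classical steps (the Fubini/beta-function computation and the $f_{1-\alpha}-g_{1-\alpha}=c$ argument) rather than citing the $L^1$ theorem as a black box, while you invoke it directly; the logical content is the same.
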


\begin{proof}
The proof is quite similar to that of Theorem 2.1 in \cite{SKM}. Assume that \eqref{ae} is solvable in ${L}^{1,\lambda}_+((0,T),\omega)$. By the Fubini theorem, we get
$$\int_{0}^{x}\varphi(\tau)d\tau\int_{\tau}^{x}\frac{dt}{(x-t)^{\alpha}(t-\tau)^{1-\alpha}}
=\Gamma(\alpha)\int_{0}^{x}\frac{f(t)dt}{(x-t)^{\alpha}}.$$
Note that
$$\int_{0}^{x}\frac{dt}{(x-t)^{\alpha}}\int_{0}^{t}\frac{\varphi(\tau)}{(t-\tau)^{1-\alpha}}d\tau
=\Gamma(\alpha)\int_{0}^{x}\frac{f(t)dt}{(x-t)^{\alpha}}.$$
Therefore,
$$B(\alpha,1-\alpha)\int_{0}^{x}\varphi(\tau)d\tau
=\Gamma(\alpha)\int_{0}^{x}\frac{f(t)dt}{(x-t)^{\alpha}}.$$
Then
\begin{equation}
\int_{0}^{x}\varphi(\tau)d\tau
=\frac{1}{\Gamma(1-\alpha)}\int_{0}^{x}\frac{f(t)dt}{(x-t)^{\alpha}}=f_{1-\a}(x).
\label{f1}
\end{equation}
If $\varphi\in {L}^{1,\lambda}_+((0,T),\omega)$, we get
$$\frac{1}{T^{\lambda-1}{\omega(-T,0)}}\int_{(0,T)}|\varphi(\tau)|d\tau<+\infty,$$
which implies $\varphi\in {L}^{1,\lambda}_+((0,T),\omega)\subset L^{1}(0,T)$. By \eqref{ac},
$f_{1-\alpha}(x)\in AC([0,T])$ and $f_{1-\alpha}(0)=0$.

After differentiation in \eqref{f1}, we have
\begin{equation}
\varphi(x)=\frac{1}{\Gamma(1-\alpha)}\frac{d}{dt}\int_{0}^{x}\frac{f(t)dt}{(x-t)^{\alpha}}=f'_{1-\alpha}(x).
\label{22}
\end{equation}
Thus $f'_{1-\alpha}(x)\in {L}^{1,\lambda}_+((0,T),\omega).$

On the other hand, from $f'_{1-\alpha}(x)\in {L}^{1,\lambda}_+((0,T),\omega)$ we know that the function $\varphi(x)$ given by
\eqref{22} exists and $\varphi(x)\in {L}^{1,\lambda}_+((0,T),\omega)$. Next we need to show that it is indeed a solution of \eqref{ae}.
Substitute $\varphi(x)$ into the left hand side of \eqref{ae} and denote the result by $g(x)$, i.e.
\begin{equation}
\frac{1}{\Gamma(\alpha)}\int_{0}^{x}\frac{\varphi(t)}{(x-t)^{1-\alpha}}dt=
\frac{1}{\Gamma(\alpha)}\int_{0}^{x}\frac{f'_{1-\alpha}(t)}{(x-t)^{1-\alpha}}dt=:g(x).\label{210}
\end{equation}
It suffices to show that $g(x)=f(x)$. Note that \eqref{210}
is an equation of type \eqref{ae} with respect to $f'_{1-\alpha}(x)$. Using \eqref{22}, we get
\begin{equation*}
f'_{1-\alpha}(x)=\frac{1}{\Gamma(1-\alpha)}\frac{d}{dx}\int_{0}^{x}\frac{g(t)dt}{(x-t)^{\alpha}}=g'_{1-\alpha}(x).
\end{equation*}
Since $f_{1-\alpha}$ is absolutely continuous, and by \eqref{f1} with $g(x)$
substituting for $f(x)$ on the right hand side, we deduce that $g_{1-\alpha}$ is also absolutely continuous. Then
$$f_{1-\alpha}(x)-g_{1-\alpha}(x)=c.$$
%Since $f_{1-\alpha}(0)=0$ by conjecture, and
Since \eqref{210} is a solvable equation, we get $g_{1-\alpha}(0)=0$ . This together with $f_{1-\alpha}(0)=0$ leads to $c=0$. Hence
$$\int_{0}^{x}\frac{f(t)-g(t)}{(x-t)^{\alpha}}dt=0,$$
which is also an equation of type \eqref{ae}. The uniqueness of its solution implies that $f(x)-g(x)=0$.
This completes the proof of Theorem \ref{Theorem 5.2}.
\end{proof}

\section{{Nonlinear fractional differential equations in locally one-sided weighted Morrey spaces}}
Consider the classical Cauchy problem for the nonlinear fractional differential equation:
\begin{equation}
\begin{cases}
D_{0^{+}}^{\alpha}u(t)=f(t,u(t)),\\
I_{0^{+}}^{1-\alpha}u(0)=0.\label{eq}
\end{cases}
\end{equation}
The initial condition $I_{0^{+}}^{1-\alpha}u(0)=0$ in \eqref{eq} is (more or less) equivalent to the following initial (weighted)
condition:
$$\lim_{t\rightarrow 0^+}t^{1-\alpha}u(t)=0.$$
For more details, see \cite{KST}. Kilbas and Trujillo \cite{KST} established the following relation between the fractional integration operator $I_{0^{+}}^{\alpha}$ and the
fractional differentiation operator $D_{0^+}^{\alpha}$.
\begin{lemma}[\cite{KST}]\label{th:lem0}
Let $0<\alpha<1$.
 If $f(x)\in L^1(0,T)$ and $f_{1-\alpha}(x)\in AC([0,T])$, then the equality
$$\left(I_{0^{+}}^{\alpha}D_{0^{+}}^{\alpha}f\right)(x)=f(x)-\frac{f_{1-\alpha}(0)}{\Gamma(\alpha)}x^{\alpha-1}$$
holds almost everywhere on $[0,T]$, where $f_{1-\alpha}(x)$ is defined in \eqref{f1-}.
\end{lemma}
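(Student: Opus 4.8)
The plan is to reduce the claimed identity to two standard facts of one-dimensional fractional calculus: the semigroup law $I_{0^{+}}^{a}I_{0^{+}}^{b}=I_{0^{+}}^{a+b}$ for $a,b>0$ on $L^{1}(0,T)$ --- which is exactly the Fubini and Beta-function computation already performed in the proof of Theorem \ref{Theorem 5.2} --- together with the description \eqref{ac} of $AC([0,T])$ as the class of primitives of functions in $L^{1}(0,T)$.

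First I would record the preliminaries. Since $f\in L^{1}(0,T)$, the function $f_{1-\alpha}=I_{0^{+}}^{1-\alpha}f$ lies in $L^{1}(0,T)$; and the hypothesis $f_{1-\alpha}\in AC([0,T])$, together with \eqref{ac}, gives $f_{1-\alpha}(x)=f_{1-\alpha}(0)+\int_{0}^{x}f'_{1-\alpha}(t)\,dt$ with $f'_{1-\alpha}\in L^{1}(0,T)$. Because $D_{0^{+}}^{\alpha}f=\frac{d}{dx}f_{1-\alpha}=f'_{1-\alpha}$ almost everywhere, the quantity $I_{0^{+}}^{\alpha}D_{0^{+}}^{\alpha}f$ is well defined and again belongs to $L^{1}(0,T)$, since $I_{0^{+}}^{\alpha}$ maps $L^{1}(0,T)$ into itself.

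Next I would evaluate $G:=I_{0^{+}}^{\alpha}f_{1-\alpha}$ in two ways. By the semigroup law, $G=I_{0^{+}}^{\alpha}I_{0^{+}}^{1-\alpha}f=I_{0^{+}}^{1}f$, so $G(x)=\int_{0}^{x}f(t)\,dt$, whence $G\in AC([0,T])$ and $G'(x)=f(x)$ for a.e. $x$. On the other hand, substituting $f_{1-\alpha}(t)=f_{1-\alpha}(0)+\int_{0}^{t}f'_{1-\alpha}(s)\,ds$ into $G(x)=\frac{1}{\Gamma(\alpha)}\int_{0}^{x}(x-t)^{\alpha-1}f_{1-\alpha}(t)\,dt$ and interchanging the order of integration (legitimate for $0<\alpha<1$ and $f'_{1-\alpha}\in L^{1}$, because the integrand is absolutely integrable over the triangle $0\le s\le t\le x$) gives
$$G(x)=\frac{f_{1-\alpha}(0)}{\Gamma(\alpha+1)}\,x^{\alpha}+\frac{1}{\Gamma(\alpha+1)}\int_{0}^{x}(x-s)^{\alpha}f'_{1-\alpha}(s)\,ds=\frac{f_{1-\alpha}(0)}{\Gamma(\alpha+1)}\,x^{\alpha}+I_{0^{+}}^{\alpha+1}f'_{1-\alpha}(x).$$
Applying the semigroup law once more, $I_{0^{+}}^{\alpha+1}f'_{1-\alpha}=I_{0^{+}}^{1}\bigl(I_{0^{+}}^{\alpha}f'_{1-\alpha}\bigr)$ is the primitive of the $L^{1}$ function $I_{0^{+}}^{\alpha}f'_{1-\alpha}$, hence absolutely continuous with a.e. derivative $I_{0^{+}}^{\alpha}f'_{1-\alpha}$. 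Since $\frac{d}{dx}\bigl(x^{\alpha}/\Gamma(\alpha+1)\bigr)=x^{\alpha-1}/\Gamma(\alpha)$, differentiating the second expression for $G$ yields $G'(x)=\frac{f_{1-\alpha}(0)}{\Gamma(\alpha)}x^{\alpha-1}+I_{0^{+}}^{\alpha}D_{0^{+}}^{\alpha}f(x)$ a.e., and equating this with $G'(x)=f(x)$ gives the asserted identity almost everywhere on $[0,T]$.

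The only delicate points are bookkeeping: one must justify the two interchanges of integration --- harmless because $(x-t)^{\alpha-1}$ and $(x-s)^{\alpha}$ are locally integrable for $\alpha\in(0,1)$ and all data are $L^{1}$ --- and one must invoke ``a primitive of an $L^{1}$ function is absolutely continuous with a.e. derivative equal to the integrand'' only for genuine $L^{1}$ functions, here $f$, $f'_{1-\alpha}$, and $I_{0^{+}}^{\alpha}f'_{1-\alpha}$ (the last lying in $L^{1}(0,T)$ precisely because $I_{0^{+}}^{\alpha}$ is bounded on $L^{1}(0,T)$). No reverse H\"older inequality, doubling condition, or weight hypothesis enters; the statement concerns only the scalar Riemann--Liouville operators underlying Abel's equation \eqref{ae}.
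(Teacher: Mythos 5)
Your argument is correct. Note that the paper offers no proof of Lemma \ref{th:lem0} at all --- it is stated as a quoted result from \cite{KST} --- so there is no internal proof to compare against; what you have written is essentially the standard proof from the cited sources (cf.\ \cite{KST}, \cite{SKM}), and it is sound. The two pillars you use are exactly the right ones: the semigroup identity $I_{0^{+}}^{a}I_{0^{+}}^{b}=I_{0^{+}}^{a+b}$ on $L^{1}(0,T)$, which is the same Fubini/Beta-function computation the paper itself performs inside the proof of Theorem \ref{Theorem 5.2}, and the characterization \eqref{ac} of $AC([0,T])$ as primitives of $L^{1}$ functions, which legitimizes writing $D_{0^{+}}^{\alpha}f=f'_{1-\alpha}\in L^{1}(0,T)$. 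The double evaluation of $G=I_{0^{+}}^{\alpha}f_{1-\alpha}$ (once as $\int_{0}^{x}f$, once as $\frac{f_{1-\alpha}(0)}{\Gamma(\alpha+1)}x^{\alpha}+I_{0^{+}}^{\alpha+1}f'_{1-\alpha}$) followed by a.e.\ differentiation closes the argument cleanly, and your justification of the two interchanges of integration and of the integrability of $I_{0^{+}}^{\alpha}f'_{1-\alpha}$ is adequate. I find no gap.
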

Next, we recall the famous Schauder fixed-point theorem.
\begin{lemma}[\cite{S}]\label{le 6.1}
Let $H$ be a convex and closed subset of a Banach space. Then any continuous
and compact map $T : H \rightarrow H$ has a fixed point.
\end{lemma}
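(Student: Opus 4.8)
This is the classical Schauder fixed-point theorem, and the plan is to reduce it to Brouwer's theorem in finite dimensions by a two-stage approximation. First I would shrink the domain: since $T$ is a compact map, $T(H)$ is relatively compact, so $K:=\overline{\mathrm{conv}}\,T(H)$ is a compact, convex subset of the Banach space, and since $H$ is closed and convex we have $K\subseteq H$ together with $T(K)\subseteq T(H)\subseteq K$. Hence it suffices to find a fixed point of $T|_K$, and from here on I would simply assume that $H$ itself is compact and convex.

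Next I would introduce the Schauder projections. For each $n$, pick a finite $\tfrac1n$-net $\{x_1,\dots,x_m\}$ of $H$, put $\phi_j(x)=\max\{0,\tfrac1n-\|x-x_j\|\}$, and define $P_n(x)=\big(\sum_j\phi_j(x)x_j\big)\big/\big(\sum_j\phi_j(x)\big)$. The key points are that $P_n$ is continuous, that it maps $H$ into the finite-dimensional compact convex polytope $H_n:=\mathrm{conv}\{x_1,\dots,x_m\}\subseteq H$, and that $\|P_n(x)-x\|<\tfrac1n$ for every $x\in H$. Then $T_n:=P_n\circ T$ maps $H_n$ into itself, and since $H_n$ lies in a finite-dimensional subspace, Brouwer's fixed-point theorem provides $y_n\in H_n$ with $y_n=P_n(Ty_n)$, hence $\|y_n-Ty_n\|<\tfrac1n$.

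Finally I would pass to the limit: compactness of $H$ lets me extract a subsequence $y_{n_k}\to y\in H$; continuity of $T$ gives $Ty_{n_k}\to Ty$, and combined with $\|y_{n_k}-Ty_{n_k}\|\to0$ this forces $Ty=y$. I expect the main obstacle to be the middle stage, namely setting up the Schauder projection so that it is simultaneously continuous, stays inside $H$ (which is precisely why the reduction to a compact \emph{convex} set in the first step is indispensable), and is uniformly $\tfrac1n$-close to the identity; once that is in place, the appeal to Brouwer's theorem and the compactness passage to the limit are routine.
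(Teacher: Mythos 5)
The paper does not prove this lemma at all: it is quoted as the classical Schauder fixed-point theorem with a citation to Schauder's 1930 paper, so there is no in-paper argument to compare against. Your sketch is the standard and correct proof (reduction to $K=\overline{\mathrm{conv}}\,T(H)$, Schauder projections onto finite-dimensional polytopes, Brouwer's theorem, and a compactness passage to the limit); the only step worth flagging explicitly is that the compactness of $\overline{\mathrm{conv}}\,T(H)$ in the first reduction is Mazur's theorem and uses completeness of the ambient Banach space, but this is a known fact and does not affect the validity of your argument.
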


By fixed-point theorem above, we establish the existence and uniqueness of solutions to the Cauchy problem \eqref{eq} in locally one-sided weighted Morrey spaces.

\begin{theorem}\label{th:thm42}
Let $0<\sigma<1$, $1<p, q<\infty$, $0\leq\beta,\mu<1$, $1/p-1/q=\sigma$, $\omega\in A_{(p,q)}^+$ and $q+\mu\leq2$. Suppose the operator $F:F(u)=f(t,u(t))$ is bounded, continuous
from ${L}^{q,\mu}_+((0,\delta),\omega^q)$ to ${L}^{p,\beta}_+((0,\delta),\omega^p)$.
If $\frac{1}{2}(1+\frac{1}{p})<\alpha<1$, then the Cauchy problem \eqref{eq} has at least a solution $u\in {L}^{q,\mu}_+((0,\delta),\omega^q)$
for a sufficiently small $\delta$.
Furthermore, if there exists a constant $C_F>0$ such that
\begin{equation}
\|Fu-Fv\|_{L^{p,\beta}_+((0,\delta),\omega^p)}\leq C_F\|u-v\|_{L^{q,\mu}_+((0,\delta),\omega^q)},\quad u,v\in L^{q,\mu}((0,\delta),\omega^q),
\label{CF}
\end{equation}
then the solution of \eqref{eq} is unique in ${L}^{q,\mu}((0,\delta),\omega^q)$ for a sufficiently small $\delta$.
\end{theorem}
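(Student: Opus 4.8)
The plan is to recast the Cauchy problem as a fixed-point problem for the operator $T$ from Proposition \ref{prop1} composed with the Nemytskii operator $F$, and then apply the Schauder fixed-point theorem (Lemma \ref{le 6.1}) for existence and the Banach contraction principle for uniqueness. First I would observe, using Lemma \ref{th:lem0} and the initial condition $I_{0^+}^{1-\alpha}u(0)=0$ (equivalently $f_{1-\alpha}(0)=0$), that $u$ solves \eqref{eq} on $(0,\delta)$ if and only if $u$ is a fixed point of the map $\mathcal{A}u := T(Fu)$, where $Tu(t)=\frac{\chi_{(0,\delta)}(t)}{\Gamma(\alpha)}\int_0^t (t-\tau)^{\alpha-1}u(\tau)\,d\tau$. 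The hypothesis $\frac12(1+\frac1p)<\alpha<1$ is exactly what Proposition \ref{prop1} requires, so $T\colon L^{p,\beta}_+((0,\delta),\omega^p)\to L^{q,\mu}_+((0,\delta),\omega^q)$ is compact; since $F$ is bounded and continuous from $L^{q,\mu}_+$ to $L^{p,\beta}_+$ by assumption, $\mathcal{A}=T\circ F$ is continuous and compact from $L^{q,\mu}_+((0,\delta),\omega^q)$ to itself.

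Next I would construct a closed convex set $H$ invariant under $\mathcal{A}$. Take $H$ to be the closed ball of radius $M$ in $L^{q,\mu}_+((0,\delta),\omega^q)$ centred at $0$. From the quantitative estimate in Theorem \ref{Theorem 4.1} (or the estimate for $K$ in its proof), one has a bound of the shape
\begin{equation*}
\|T(Fu)\|_{L^{q,\mu}_+((0,\delta),\omega^q)} \lesssim \delta^{\,1-\mu+\frac{(\alpha p+\beta-1)q}{p}}\,\|Fu\|_{L^{p,\beta}_+((0,\delta),\omega^p)},
\end{equation*}
and since $F$ is bounded on the ball $H$ (say $\|Fu\|_{L^{p,\beta}_+}\le N$ for $\|u\|_{L^{q,\mu}_+}\le M$), the exponent on $\delta$ is positive because $\alpha>1/p$ forces $\alpha p+\beta-1>\beta\ge0$; hence choosing $\delta$ small enough gives $\|\mathcal{A}u\|_{L^{q,\mu}_+}\le M$, i.e. $\mathcal{A}(H)\subset H$. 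Lemma \ref{le 6.1} then yields a fixed point $u\in H$, which is the desired solution in $L^{q,\mu}_+((0,\delta),\omega^q)$.

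For uniqueness, under the Lipschitz hypothesis \eqref{CF} I would estimate, for $u,v\in L^{q,\mu}_+((0,\delta),\omega^q)$,
\begin{equation*}
\|\mathcal{A}u-\mathcal{A}v\|_{L^{q,\mu}_+((0,\delta),\omega^q)} = \|T(Fu-Fv)\|_{L^{q,\mu}_+} \lesssim \delta^{\,1-\mu+\frac{(\alpha p+\beta-1)q}{p}} C_F \|u-v\|_{L^{q,\mu}_+},
\end{equation*}
using the same $\delta$-dependent bound for $T$ together with \eqref{CF}; shrinking $\delta$ further makes the constant strictly less than $1$, so $\mathcal{A}$ is a contraction and the fixed point is unique. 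The main obstacle I anticipate is bookkeeping the $\delta$-power in the operator norm of $T$ carefully enough to see it is a genuine positive power (so that it can be made small), and confirming that the truncation $\chi_{(0,\delta)}$ together with Lemma \ref{th:lem0} really produces an exact equivalence between \eqref{eq} and the fixed-point equation rather than just an implication; both are handled by tracking the estimate from Theorem \ref{Theorem 4.1} and invoking $f_{1-\alpha}(0)=0$, but they deserve explicit care.
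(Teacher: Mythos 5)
Your plan coincides with the paper's proof: the authors likewise reduce \eqref{eq} via Lemma \ref{th:lem0} to the fixed-point equation $u=T(Fu)$, obtain complete continuity of $A=T\circ F$ from Proposition \ref{prop1} and the hypotheses on $F$, show a closed ball is invariant using the quantitative $\delta$-power bound from the $K$-estimate of Theorem \ref{Theorem 4.1}, apply Schauder, and then upgrade to uniqueness via the contraction principle under \eqref{CF}. The only discrepancy is cosmetic: your displayed exponent $1-\mu+\frac{(\alpha p+\beta-1)q}{p}$ is the one for the $q$-th power of the norm, so the norm itself carries $\delta^{\frac{1-\mu}{q}+\frac{\alpha p+\beta-1}{p}}$, which is still positive, exactly as you anticipated in your bookkeeping caveat.
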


\begin{proof}
Since $f(t,u(t))\in {L}^{p,\beta}_+((0,\delta),\omega^p)\subseteq L(0,\delta)$ and $D_{0^{+}}^{\alpha}u(t)=\frac{d}{dt}I_{0^{+}}^{1-\alpha}u(t)$, by \eqref{ac}, we have
$$u_{1-\alpha}=I_{0^{+}}^{1-\alpha}u(t)\in AC([0,\delta]).$$ By Lemma \ref{th:lem0},
 the derivative equation \eqref{eq} in ${L}^{q,\mu}_+((0,\delta),\omega^q)$ is equivalent to the following integral equation:
\begin{equation}\begin{split}
u(t)&=\begin{cases}\frac{1}{\Gamma(\alpha)}\int_{0}^{t}\frac{f(\tau,u(\tau))}{(t-\tau)
^{1-\alpha}}d\tau,\quad t\leq \delta\\
0,\quad\quad \quad\quad\quad \quad\,\, a.e.\quad t\geq\delta,
\end{cases}\\
&=\frac{\chi_{(0,\delta)}(t)}{\Gamma(\alpha)}\int_{0}^{t}\frac{f(\tau,u(\tau))}
{(t-\tau)^{1-\alpha}}d\tau=:T(f(t,u)).\label{eq1}
\end{split}\end{equation}
Set
$$Au(t):=T(f(t,u))=\frac{\chi_{(0,\delta)}(t)}{\Gamma(\alpha)}\int_{0}^{t}\frac{f(\tau,u(\tau))}
{(t-\tau)^{1-\alpha}}d\tau.$$
Then the equation \eqref{eq1} has a solution in ${L}^{q,\mu}_+((0,\delta),\omega^q)$
if and only if the operator $A$ has a fixed point in ${L}^{q,\mu}_+((0,\delta),\omega^q)$.

Next, we show that $A$ is completely continuous. Recalling from Proposition \ref{prop1}, we deduce that $T$ is a compact operator from ${L}^{p,\beta}_+((0,\delta),\omega^p)$ to ${L}^{q,\mu}_+((0,\delta),\omega^q)$. Since
$F:u\rightarrow f(t,u)$ is bounded and continuous from ${L}^{q,\mu}_+((0,\delta),\omega^q)$ to ${L}^{p,\beta}_+((0,\delta),\omega^p)$ and $Au(t)=TFu(t)$, we conclude that $A$ is a compact operator from
${L}^{q,\mu}_+((0,\delta),\omega^q)$ to ${L}^{q,\mu}_+((0,\delta),\omega^q)$ and is
also continuous. Hence, $A:{L}^{q,\mu}_+((0,\delta),\omega^q)\rightarrow {L}^{q,\mu}_+((0,\delta),\omega^q)$
is completely continuous.

Choose a positive constant $0<M^*<\infty$, set $D:=\big\{u:\|u\|_{{L}^{q,\mu}_+((0,\delta),\omega^q)}\leq M^*\big\}$. Then $D$ is a bounded closed convex set.
For any $0\leq x_0\leq \delta$ and $0<h\leq \delta$, we have
\begin{align*}
&\frac{1}{\Phi_{\mu,q}^{\omega,+}(x_0,h)}\int_{(x_0,x_0+h)\bigcap(0,\delta)}|Au(t)|^qdt\\
&\quad\leq\frac{1}{\Phi_{\mu,q}^{\omega,+}(x_0,h)\Gamma(\alpha)^{q}}\int_{(x_0,x_0+h)
\bigcap(0,\delta)}
\left|\int_{0}^{t}\frac{f(\tau,u(\tau))}{(t-\tau)^{1-\alpha}}d\tau\right|^qdt\\
%&\leq\frac{1}{h^{\mu-1}\omega^q(x_0-h,x_0)\Gamma(\alpha)^q}\int_{(x_0,x_0+h)\cap(0,\delta)}
%\left(\int_{0}^{t}|f(\tau,u(\tau))|^pd\tau\right)^{\frac{q}{p}}t^{\frac{(\alpha p-1)q}{p}}dt\\
&\quad=:\mathcal{K}.
\end{align*}
%$(1)$ When $\delta<h$, by the H\"older inequality and the same discussion as in Theorem \ref{Theorem 4.1}, we get
%\begin{equation*}\begin{split}
%\mathcal{K}\leq&\frac{\big(\frac{\alpha p-1}{p-1}\big)^{-q/p'}{}}{h^{\mu-1}\omega^q(x_0-h,x_0)\Gamma(\alpha)^{q}}\int_{(x_0,x_0+h)
%\bigcap(0,\delta)}\left[\int_{0}^{t}|f(\tau,u(\tau))|^pd\tau\right]^{{q/p}}
%t^{\frac{(\alpha p-1)q}{p}}dt\\
%\leq& \frac{C\|Fu\|_{L^{p,\beta}_+((0,\delta),\omega^p)}^q}{\big(\frac{\alpha p-1}{p-1}\big)^{q/p'}\Gamma(\alpha)^q}\int_{0}^{\d}t^{\frac{(\alpha p-1)q}{p}}dt\\
%\leq& \frac{C\|F\|^qR^q}{\Gamma(\alpha)^q\big(\frac{\alpha p-1}{p-1}\big)^{q/p'}}\delta^{\frac{(\alpha p-1)q}{p}+1}.\\
%\end{split}\end{equation*}
For $h\leq\delta$, there exists a nonnegative integer $K$ such that $\delta/2^{k+1}<h\leq \delta/2^{k}$. By the H\"older inequality, we get
\begin{align*}
\mathcal{K}
\leq&
\frac{1}{\Phi_{\mu,q}^{\omega,+}(x_0,h)\Gamma(\alpha)^q\big(\frac{\alpha p-1}{p-1}\big)^{q/p'}}\int_{(x_0,x_0+h)\bigcap(0,\delta)}\bigg[\int_{0}^{\delta}|f(\tau,u(\tau))|^pd
\tau\bigg]^{\frac{q}{p}}t^{\frac{(\alpha p-1)q}{p}}dt\\
\leq& \frac{\delta^{{{{(\alpha p-1+\beta)q/p}}}-1}\|Fu\|_{L^{p,\beta}_+((0,\delta),\omega^p)}^q}{\big(\frac{\alpha p-1}{p-1}\big)^{q/p'}\Gamma(\alpha)^qh^{\mu-2}}\frac{\omega^q(x_0-2\delta,x_0)}
{\omega^q(x_0-h,x_0)}\\
\leq& \frac{C\|F\|^qR^q}{\Gamma(\alpha)^q\left(\frac{\alpha p-1}{p-1}\right)^{q/p'}}\delta^{\frac{(\alpha p+\beta-1)q}{p}+1-\mu}.
\end{align*}
Then
$$\|Au\|_{L^{q,\mu}_+((0,\delta),\omega^q)}\leq \frac{C\|F\|R}{\Gamma(\alpha)\big(\frac{\alpha p-1}{p-1}\big)^{1/p'}}\delta^{\frac{\alpha p+\beta-1}{p}+\frac{1-\mu}{q}}.$$
Set
$$\delta=\bigg[\frac{\Gamma(\alpha)\big(\frac{\alpha p-1}{p-1}\big)^{1/p'}}
{C\|F\|}\bigg]^{\left(\frac{\alpha p+\beta-1}{p}+\frac{1-\mu}{q}\right)^{-1}}.$$
Hence
$$\|Au\|_{L^{p,\beta}_+((0,\delta),\w^p)}\leq M^*.$$
That is to say $A:D\rightarrow D$. It follows from Lemma \ref{le 6.1} that $A$ has a fixed point in $D$. Therefore, Equation \eqref{eq} has at least a
solution in  ${L}^{q,\mu}_+((0,\delta),\omega^q)$.

In addition, if \eqref{CF} holds, then for $u_1,~u_2\in {L}^{q,\mu}_+((0,\delta),\omega^q)$, by the similar
discussions as above, we have
\begin{equation}\begin{split}
\|Au_1-Au_2\|_{L^{q,\mu}_+((0,\delta),\omega^q)}
&\leq \frac{C\|Fu_1-Fu_2\|_{L^{q,\mu}_+((0,\delta),\omega^q)}}{\Gamma(\alpha)(\frac{\alpha q-1}{q-1})^{1/p'}}\delta^{\frac{\alpha p-1}{p}+\frac{1-\mu}{q}}\\
&\leq \frac{CC_F}
{\Gamma(\alpha)(\frac{\alpha p-1}{p-1})^{1/p'}}\delta^{\frac{\alpha p-1}{p}+\frac{1-\mu}{q}}
\|u_1-u_2\|_{L^{q,\mu}((0,\delta),\omega^q)}.
\end{split}\end{equation}
Set
$$\delta<\bigg[\frac{\Gamma(\alpha)\big(\frac{\alpha q-1}{q-1}\big)^{1/p'}}
{CC_F}\bigg]^{\left[\frac{\alpha p-1}{p}+\frac{1-\mu}{q}\right]^{-1}}.$$
Then
$A$ is a contraction mapping in ${L}^{q,\mu}_+((0,\delta),\omega^q)$ and has a unique fixed point in ${L}^{q,\mu}_+((0,\delta),\omega^q)$. This implies that the Cauchy problem \eqref{eq} has a unique solution in ${L}^{q,\mu}_+((0,\delta),\omega^q)$.
\end{proof}
\begin{remark}\label{Rem2.1}\,\
The conditions that operator $F:F(u)=f(t,u(t))$ is bounded, continuous
from ${L}^{q,\mu}_+((0,\delta),\omega^q)$ to ${L}^{p,\beta}_+((0,\delta),\omega^p)$, $\frac{1}{2}(1+\frac{1}{p})<\alpha<1$ are sufficient but not necessary.
\end{remark}
Indeed, for the following differential equation of fractional order $0<\a<1$
(see \cite[Example $4.4$]{FTW}),
\begin{equation}\label{equ 1}\,\
\begin{cases}
D_{0^+}^{\a}u(t)=\lambda t^{\g}(u(t))^2,\\
I_{0^+}^{1-\a}u(0)=0,
\end{cases}
\end{equation}
where $t>0$, $\lambda,\g \in \mathbb{R}$ and $\lambda\neq0$. Note that equation $(\ref{equ 1})$ has the exact solution
\begin{equation*}
u(t)=\begin{cases}
\frac{\Gamma (1-\a-\g)}{\lambda\Gamma (1-2\a-\g)}t^{-(\a+\g)},\quad 0<t<\delta,\\
0,\quad \rm{a.e.}  \emph{t}\geq\d,
\end{cases}
\end{equation*}
where $0<\a+\g<1$. Moreover, in this case, we also have
\begin{equation*}
f(t,u(t))=\begin{cases}
\frac{1}{\lambda}\big[\frac{\Gamma (1-\a-\g)}{\Gamma (1-2\a-\g)}\big]^2t^{-(2\a+\g)},\quad 0<t<\delta,\\
0,\quad \rm{a.e.}  \emph{t}\geq\d.
\end{cases}
\end{equation*}
Take $\w=|x|^{-{1/(2q)}}\in A_{(p,q)}^+$, we claim that $u\in {L}^{q,\mu}_+((0,\delta),|x|^{-{1/2}})$ for $1-(\a+\g)q-\mu>0$, $\m\geq 1/2$. Meanwhile, we also obtain that $f(t,u(t))\notin {L}^{p,\beta}_+((0,\delta),|x|^{-{p/(2q)}})$ for $2(\a+\g)p>1$.

For $0\leq x_0\leq \d$, $0<h\leq \delta$, we have
\begin{align*}
&\frac{1}{\Phi_{\mu,q}^{\omega,+}(x_0,h)}\int_{(x_0,x_0+h)\bigcap(0,\d)}|u(t)|^qdt\\
&\quad\lesssim\frac{1}{{h^{\mu-1}}\int_{x_0-h}^{x_0}|x|^{-{1/2}}dx}\int_{(x_0,x_0+h)
\bigcap(0,\delta)}t^{-(\a+\g)q}dt\\
&\quad=:H_2.
\end{align*}
%$(1)$ If $h\geq\delta$, then
%\begin{align*}
%H_2&\lesssim\frac{h^{1-\mu}}{\sqrt{x_0}+\sqrt{h-x_0}}\int_{0}^{\d}t^{-(\a+\g)q}dt
%\lesssim\frac{1}{h^{\m-1/2}}\d^{1-(\a+\g)q}
%\lesssim \d^{\frac{3}{2}-(\a+\g)q-\mu}<\infty.
%\end{align*}
If $x_0>h>0$, then
\begin{align*}
H_2\lesssim&\frac{1}{{h^{\mu-1}}(\sqrt{x_0}-\sqrt{x_0-h})}
\int_{x_0}^{x_0+h}t^{-(\a+\g)q}dt\\
\lesssim& \frac{\sqrt{x_0}+\sqrt{x_0-h}}{{h^{\mu}}}\left[(x_0+h)^{1-(\a+\g)q}-x_0^{1-(\a+\g)q}\right]\\
\lesssim& \d^{\frac{3}{2}-(\a+\g)q-\mu}< \infty.
\end{align*}
If $ x_0\leq h \leq \delta$, then
\begin{align*}
H_2\lesssim &\frac{1}{{h^{\mu-1}}(\sqrt{x_0}+\sqrt{h-x_0})}\int_{x_0}^{x_0+h}t^{-(\a+\g)q}dt\\
\leq& \frac{1}{{h^{\mu-1/2}}}\left[(x_0+h)^{1-(\a+\g)q}-x_0^{1-(\a+\g)q}\right]\\
\leq& \d^{\frac{3}{2}-(\a+\g)q-\mu}< \infty.
\end{align*}
Combining with the above estimates, we have $u\in {L}^{q,\mu}_+((0,\delta),|x|^{-1/2})$.

On the other hand, we turn to prove $f(t,u(t))\notin {L}^{p,\beta}_+((0,\delta),|x|^{-{p/(2q)}})$. Take $h=\d$ and $x_0=0$. Then
\begin{align*}
&\frac{1}{{\d^{\beta-1}}\int_{-\d}^{0}|x|^{-{p/2q}}dx}\int_{0}^{\delta}\frac{1}{\lambda^p}
\Big[\frac{\Gamma (1-\a-\g)}{\Gamma (1-2\a-\g)}\Big]^{2p}t^{-(2\a+\g)p}dt
\\&\quad\geq\frac{1}{{\d^{\beta-{p/2q}}}}\frac{1}{\lambda^p}
\Big[\frac{\Gamma (1-\a-\g)}{\Gamma (1-2\a-\g)}\Big]^{2p}\int_{0}^{\delta}t^{-(2\a+\g)p}dt
\\&\quad=\infty.
\end{align*}
This implies that $f(t,u(t))\notin {L}^{p,\beta}_+((0,\delta),|x|^{-{p/2q}})$.

\noindent {\bf Acknowledgments} The third author would like to express his gratitude to Professor Dachun Yang for his valuable comments and discussion. This work was partially funded by the National Natural Science Foundation of China (Grant Nos. 11671185 and 11771195), the Natural Science Foundation of Shandong Province
(No. ZR2018LA002).

\bibliographystyle{amsplain}

\end{document}